\documentclass{amsart}

\usepackage{amsmath,amssymb,amsthm,mathtools}
\usepackage{enumitem}
\usepackage{mathrsfs}
\usepackage{xcolor}
\usepackage{aliascnt}
\definecolor{refblue}{RGB}{0,27,126}
\definecolor{citegreen}{RGB}{0,126,68}
\usepackage[
  colorlinks=true,
  linkcolor=refblue,
  citecolor=citegreen,
  urlcolor=purple,
  filecolor=purple]{hyperref}
\hypersetup{pdftitle={Sharp shifted reciprocal sums of Neumann eigenvalues on space forms},pdfauthor={Daguang Chen, Chengxi Yang}}
\usepackage[nameinlink,capitalize,noabbrev]{cleveref}
\usepackage{microtype}
\usepackage[a4paper, left=2.8cm, right=2.8cm, top=2.5cm, bottom=2.5cm]{geometry}

\numberwithin{equation}{section}

\newtheorem{theorem}{Theorem}[section]

\newaliascnt{proposition}{theorem}
\newtheorem{proposition}[proposition]{Proposition}
\aliascntresetthe{proposition}

\newaliascnt{lemma}{theorem}
\newtheorem{lemma}[lemma]{Lemma}
\aliascntresetthe{lemma}

\newaliascnt{corollary}{theorem}
\newtheorem{corollary}[corollary]{Corollary}
\aliascntresetthe{corollary}

\newaliascnt{assumption}{theorem}

\aliascntresetthe{assumption}

\theoremstyle{definition}
\newaliascnt{definition}{theorem}

\aliascntresetthe{definition}

\newaliascnt{remark}{theorem}
\newtheorem{remark}[remark]{Remark}
\aliascntresetthe{remark}

\crefname{theorem}{Theorem}{theorems}
\Crefname{theorem}{Theorem}{Theorems}

\crefname{proposition}{Proposition}{propositions}
\Crefname{proposition}{Proposition}{Propositions}

\crefname{lemma}{Lemma}{lemmas}
\Crefname{lemma}{Lemma}{Lemmas}

\crefname{corollary}{Corollary}{corollaries}
\Crefname{corollary}{Corollary}{Corollaries}

\crefname{assumption}{Assumption}{assumptions}
\Crefname{assumption}{Assumption}{Assumptions}

\crefname{definition}{Definition}{definitions}
\Crefname{definition}{Definition}{Definitions}

\crefname{remark}{Remark}{remarks}
\Crefname{remark}{Remark}{Remarks}

\newcommand{\M}{\mathbb M}
\newcommand{\Sph}{\mathbb S}
\newcommand{\Hyp}{\mathbb H}
\newcommand{\R}{\mathbb R}
\newcommand{\1}{\mathbf 1}
\newcommand{\tr}{\operatorname{tr}}
\newcommand{\Id}{\mathrm I}

\newcommand{\Span}{\operatorname{span}}

\newcommand{\dd}{\,d}
\newcommand{\transpose}{\mathsf T}

\newcommand{\Om}{\Omega}
\newcommand{\sn}{\operatorname{sn}}

\title[Shifted reciprocal sums on space forms]
{Sharp shifted reciprocal sums of Neumann eigenvalues on space forms}

\author{Daguang Chen}
\email{dgchen@tsinghua.edu.cn}
\author{Chengxi Yang}
\email{ycx24@mails.tsinghua.edu.cn}
\address{Department of Mathematical Sciences, Tsinghua University, Beijing 100084, P. R. China.}

\subjclass[2020]{Primary 35P15; Secondary 53C21, 58J50}
\keywords{Neumann eigenvalues, reciprocal sums, space forms, folding, mass transplantation, rigidity, spectral optimization}

\thanks{The authors were supported by NSFC grant No. 11831005 and NSFC-FWO W2521103.}
\date{\today}

\begin{document}

\begin{abstract}
Let $\M_\kappa^n$ be the space form of sectional curvature $\kappa\in\{-1,0,1\}$, so that $\M_{-1}^n=\mathbb{H}^n, \M_{0}^n=\mathbb{R}^n, \M_{1}^n=\mathbb{S}^n$. 
Let $\Om\subset\M_\kappa^n$ be a nonempty bounded open set with Lipschitz boundary, and assume that $0<|\Omega|<|\mathbb{S}^n|$ when $\kappa=1$. 
Write $0=\mu_0(\Om)\leq\mu_1(\Om)\leq\cdots$ for the Neumann spectrum, and let $B_R^\kappa\subset\M_\kappa^n$ be a geodesic ball of volume $\vert\Om\vert/2$. 
We prove the sharp shifted reciprocal inequality
\[
    \sum_{j=2}^{n+1}\frac1{\mu_j(\Om)} \geq \frac{n}{\mu_1(B_R^\kappa)} = \frac{n}{\mu_2(B_R^\kappa\sqcup B_R^\kappa)}.
\]
Equality holds if and only if $\Omega$ is the disjoint union of two equal geodesic balls. This gives an affirmative answer to the conjecture of \cite[Remark~11]{BucurMartinetNahon2025}.
\end{abstract}

\maketitle

\section{Introduction}
Let $\Om$ be a bounded open set in an $n$-dimensional Riemannian manifold \((M,g)\). The Neumann problem is
\begin{equation*}
    -\Delta u=\mu u\quad\text{in }\Om, \qquad \partial_\nu u=0\quad\text{on }\partial\Om.
\end{equation*}
We enumerate the eigenvalues, with multiplicity, by
\[
    0=\mu_0(\Om)\leq\mu_1(\Om)\leq\mu_2(\Om)\leq\cdots.
\]
Thus $\mu_1$ is the first positive eigenvalue when $\Om$ is connected. If $\Om$ has two components, then $\mu_0=\mu_1=0$.  Throughout the paper the reciprocal of a zero eigenvalue is understood as $+\infty$.

\subsection{The Szeg\H{o}--Weinberger line of results}
The free-membrane isoperimetric problem begins with Szeg\H{o}'s theorem \cite{Szego1954} for simply connected planar domains. Besides the sharp bound for the first positive eigenvalue, his conformal transplantation yields the two-term reciprocal inequality
\[
    \frac1{\mu_1(\Om)}+\frac1{\mu_2(\Om)}\geq\frac2{\mu_1(D)},
\]
where $D$ is the disk of the same area. 
Weinberger \cite{Weinberger1956} removed both the simple connectivity assumption and the dimensional restriction, proving that the Euclidean ball maximizes $\mu_1$ under a volume constraint. The proof chooses a center for which the $n$ transplanted degree-one ball eigenfunctions have zero mean, and then compares their mass and energy by radial transplantation. 
This center-of-mass construction is the common ancestor of all the trial functions used below.

The Szeg\H{o}--Weinberger inequality was extended to curved spaces along two complementary lines. 
In dimension two, Bandle \cite{Bandle1972,Bandle1980} used conformal transplantation to treat simply connected surfaces with curvature bounded above, while Chavel \cite{Chavel1980,Chavel1984} adapted Weinberger's trial-function method to spaces of constant curvature. 
In hyperbolic space, Ashbaugh--Benguria \cite{AshbaughBenguria1995} and Xu \cite{Xu1995} established the sharp inequality without any geometric inclusion hypothesis. 
In positive curvature, Ashbaugh and Benguria \cite{AshbaughBenguria1995} proved that a geodesic ball uniquely maximizes the first positive Neumann eigenvalue among domains contained in a hemisphere. 
For simply connected domains in $\Sph^2$, this restriction was subsequently relaxed: Langford and Laugesen \cite{LangfordLaugesen2023} extended the result to domains whose area is at most $16/17$ of $\vert\Sph^2\vert$, and Provenzano and Savo \cite{ProvenzanoSavo2026} recently removed the area restriction altogether, proving that geodesic disks are the unique maximizers for every prescribed area.



\subsection{The Ashbaugh--Benguria reciprocal conjecture}
The first positive Neumann eigenvalue of an $n$-ball has multiplicity $n$. 
This suggests retaining the whole degree-one eigenspace rather than estimating one coordinate at a time. 
Ashbaugh and Benguria \cite{AshbaughBenguria1993,Ashbaugh1999} conjectured that, if $B_{\Om}\subset\R^n$ is a ball with $|B_{\Om}|=|\Om|$, then
\begin{equation*}
    \sum_{j=1}^{n}\frac1{\mu_j(\Om)} \geq \frac{n}{\mu_1(B_{\Om})},
\end{equation*}
with equality only for a ball. 
Their universal estimate had the correct scale but not the conjectured sharp constant. 
The first decisive progress was obtained by Xia and Wang \cite{WangXia2023}, who proved the sharp $(n-1)$-term estimate in Euclidean and hyperbolic spaces,
\begin{equation*}
    \sum_{j=1}^{n-1}\frac1{\mu_j(\Om)} \geq \frac{n-1}{\mu_1(B_{\Om})},
\end{equation*}
with rigidity for the ball. 
Benguria, Brandolini, and Chiacchio \cite{BenguriaBrandoliniChiacchio2020} proved on spheres an extension of the result of Xia and Wang for harmonic means, still under the constraint on $\Omega$ to be included in a hemisphere. Subsequent work extended this lower-order harmonic-mean estimate in several directions. 
Chen and Mao \cite{ChenMao2024} treated the Witten Laplacian on Euclidean and hyperbolic domains; Chen \cite{Chen2024} obtained a comparison under an upper sectional-curvature bound and a lower Ricci-curvature bound; Gao and Wang \cite{GaoWang2026} established a Gaussian-space analogue for origin-symmetric Lipschitz domains.

The conjecture itself was recently resolved in 2026. He, Li, and Tang \cite{HeLiTang2026} posted a proof of the full Euclidean inequality for smooth bounded domains. 
Shortly thereafter, You and Zhang \cite{YouZhang2026} proved the full $n$-term inequality in the non-Euclidean space forms: for bounded connected smooth domains in hyperbolic space, and for such domains contained in an open hemisphere of the sphere, the geodesic ball uniquely minimizes the reciprocal sum. 
Thus the one-ball Ashbaugh--Benguria problem is now complete in the smooth-domain class in the Euclidean and hyperbolic settings and in the spherical hemisphere regime. 
Beyond the sharp inequality and its equality characterization, Li and Wang \cite{LiWang2026} obtained a quantitative stability result for the Euclidean reciprocal-sum inequality, relating the spectral deficit to geometric asymmetry.

These one-ball results are distinct from the two-ball problem treated here, where the reciprocal sum starts at $\mu_2$ and the comparison ball has half the volume of the domain.

\subsection{From the second Neumann eigenvalue to the Bucur--Martinet--Nahon conjecture}

The optimizer changes at the second nontrivial eigenvalue. 
Girouard, Nadirashvili, and Polterovich \cite{GirouardNadirashviliPolterovich2009} proved in the simply connected planar class that the sharp bound is approached by domains degenerating to two equal disks.
Bucur and Henrot \cite{BucurHenrot2019} removed the dimensional and topological restrictions and proved, also in a relaxed density class, that two disjoint equal Euclidean balls maximize the second nontrivial Neumann eigenvalue.
Their trial field glues two Weinberger fields across a folding hyperplane and a degree argument imposes orthogonality both to constants and to a first Neumann mode.

Freitas and Laugesen \cite{FreitasLaugesen2022} extended the theorem to hyperbolic space and gave a new proof of the Euclidean orthogonality step. 
Their argument uses a unique center of the folded measure, a canonical family of Euclidean or M\"obius translations, and Petrides's reflection-degree theorem \cite{Petrides2014}. 

On the unit sphere, Bucur, Martinet, and Nahon \cite{BucurMartinetNahon2025} proved that every open Lipschitz set, without any hemisphere-containment assumption, satisfies
\begin{equation}\label{eq:BMN-nminus1-intro}
    \sum_{j=2}^{n}\frac1{\mu_j(\Om)} \geq \frac{n-1}{\mu_1(B_R)}, \qquad \vert B_R\vert=\frac{\vert\Om\vert}{2},
\end{equation}
with equality if and only if $\Om$ is the disjoint union of two equal geodesic balls. 
Their proof involves a genuinely spherical topological argument: since an appropriate center need not be unique, they count the zeros of an equivariant vector field modulo four. 
They also stated in \cite[Remark~10]{BucurMartinetNahon2025}, that the corresponding result holds in the Euclidean space; see also \cite{Eddaoudi2025}.

In \cite[Remark~11]{BucurMartinetNahon2025}, Bucur, Martinet and Nahon asked whether the missing term in
\eqref{eq:BMN-nminus1-intro} can be restored:
\begin{equation*}
    \sum_{j=2}^{n+1}\frac1{\mu_j(\Om)} \geq \frac{n}{\mu_1(B_R)}, \qquad \vert B_R\vert=\frac{\vert\Om\vert}{2},
\end{equation*}
first on $\Sph^n$ and, analogously, in $\R^n$. 
This conjecture is supported by numerical computations on $\Sph^2$  \cite{Martinet2024}.
The constant is dictated by the model configuration consisting of two disjoint equal balls: its zero eigenspace is two-dimensional, while the next eigenvalue is $\mu_1(B_R)$ and has multiplicity $2n$. 
The same model configuration, together with the theorem of Freitas--Laugesen \cite{FreitasLaugesen2022}, naturally suggests the corresponding hyperbolic inequality. 

\subsection{Main result}
For $\kappa\in\{-1,0,1\}$ let $\M_\kappa^n$ be the space form of curvature $\kappa$:
\[
        \M_{-1}^n=\Hyp^n,\qquad \M_0^n=\R^n,\qquad \M_1^n=\Sph^n.
\]
The hyperbolic metric is normalized to curvature $-1$, and $\Sph^n$ is the unit sphere. 

The main result of this paper proves the conjecture of Bucur, Martinet and Nahon \cite[Remark~11]{BucurMartinetNahon2025}.

\begin{theorem}\label{thm:main}
Let $n\geq2$ and $\kappa\in\{-1,0,1\}$. Let $\Om\subset\M_\kappa^n$ be a nonempty bounded open set with Lipschitz boundary. In the spherical case assume $0<\vert\Om\vert<\vert\Sph^n\vert$. Let $B_R^\kappa\subset\M_\kappa^n$ be a geodesic ball of radius $R$ satisfying
\begin{equation*}
    \vert B_R^\kappa\vert = \frac{\vert\Om\vert}{2}.
\end{equation*}
Then
\begin{equation}\label{eq:main}
    \sum_{j=2}^{n+1}\frac1{\mu_j(\Om)} \geq \frac{n}{\mu_1(B_R^\kappa)} = \frac{n}{\mu_2(B_R^\kappa\sqcup B_R^\kappa)}.
\end{equation}
Equality holds if and only if $\Om$ is the disjoint union of two geodesic balls, each of volume $\vert\Om\vert/2$.
\end{theorem}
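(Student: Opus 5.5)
The plan is to combine the Bucur--Henrot/Freitas--Laugesen folding construction with the matrix (trace) form of the reciprocal-sum variational principle used by Wang--Xia and by He--Li--Tang/You--Zhang. The principle is the following. Let $V$ be an $n$-dimensional space of $H^1(\Om)$ functions, each $L^2$-orthogonal to the span $E$ of the eigenfunctions $u_0,u_1$ (for $\mu_0,\mu_1$). Then
\[
\sum_{j=2}^{n+1}\frac1{\mu_j(\Om)}\;\ge\;\tr\bigl(A\,G^{-1}A\bigr)\quad\text{or equivalently}\quad \sum_{j=2}^{n+1}\frac1{\mu_j}\ge \sup_{V}\tr\bigl(M\,K^{-1}\bigr)\text{-type bounds},
\]
where $M=(\int_\Om f_if_k)$ is the mass matrix and $K=(\int_\Om\nabla f_i\cdot\nabla f_k)$ the energy matrix. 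Concretely, by Poincaré's min--max for the compact operator $(-\Delta)^{-1}$ on $E^\perp$, $\sum_{j=2}^{n+1}\mu_j^{-1}\ge \tr(K^{-1}M)$ for any such $n$-dimensional $V$. Equality can be tracked: it forces $V$ to be the $\mu_2,\dots,\mu_{n+1}$ eigenspace.

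\textbf{Trial functions.} Let $g(r)$ be the radial part of the first Neumann eigenfunction on $B_R^\kappa$, extended by $g(R)$ for $r\ge R$. Fix a folding totally geodesic hyperplane $\Pi$ with unit normal, together with the reflection $\sigma_\Pi$. Also fix centers $p$ on one side and $\sigma_\Pi p$ on the other. Define the vector field $F=(F_1,\dots,F_n)$ on $\Om$ as in Bucur--Henrot and Freitas--Laugesen: on the side $H^+$ containing $p$, set $F(x)=g(d(x,p))\,\xi_p(x)$, where $\xi_p$ is the unit tangent direction at $p$ transported to $\R^n$ (hyperboloid, Möbius, or spherical model coordinates). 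On $H^-$ use the reflected field, with the sign chosen so that $F$ is odd or even under $\sigma_\Pi$ component-wise, as in those papers.

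The topological step requires $\int_\Om F_i=0$ and $\int_\Om F_iu_1=0$ for all $i$. This is exactly the orthogonality achieved in Bucur--Henrot, Freitas--Laugesen and Bucur--Martinet--Nahon. On the sphere it holds via the mod-$4$ zero count, in the Euclidean and hyperbolic cases via Petrides' degree theorem. Note that these papers achieve orthogonality for all $n$ coordinate functions simultaneously; they lose one term only because they estimate coordinate-wise. I will reuse their choice of $(\Pi,p)$ verbatim.

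\textbf{Trace estimate.} The key new step is to bound $\tr(K^{-1}M)$ from below by $n/\mu_1(B_R^\kappa)$ without discarding a direction. Following the one-ball resolution (He--Li--Tang, You--Zhang), I first rotate the frame $\xi$ at $p$ so that $K$ is diagonal; the orthogonality constraints are invariant under orthogonal changes of the frame. The inequality $\tr(K^{-1}M)\ge \sum_i M_{ii}^2/\ldots$ is not enough by itself. Instead I use Cauchy--Schwarz in the form
\[
\tr(K^{-1}M)\ge \frac{(\tr M)^2}{\tr(MKM)^{}}\quad\text{or}\quad \tr(K^{-1}M)\ge\frac{(\sum_i M_{ii})^2}{\sum_i M_{ii}K_{ii}},
\]
combined with a pointwise inequality. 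Write $\sum_i F_i^2=g^2$. The gradient density is $\sum_i|\nabla F_i|^2=g'^2+(n-1)g^2/\sn_\kappa^2$ on each half, where $\sn_\kappa$ denotes the curvature-$\kappa$ sine function. The folding means that the folded measure lives on $H^+$ with density at most $2$. Mass transplantation then gives
\[
\int_\Om g^2\ge \int_{B_R^\kappa\sqcup B_R^\kappa} g^2 \quad\text{and}\quad \int_\Om |\nabla F|^2\le \int_{B_R^\kappa\sqcup B_R^\kappa}|\nabla F|^2,
\]
using the monotonicity of $g$ and of the energy density. The key point is the weighted version: the ratio $\int g^2\,\omega\,/\int(\ldots)\,\omega$ must hold with weights $\omega_i=K_{ii}^{-1}$. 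For this I would follow You--Zhang's device of splitting the energy into the radial part $g'^2\langle\xi_i,\partial_r\rangle^2$ and the angular part, and exploiting that $\sum_i$ over the eigenframe of the angular weights is controlled.

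\textbf{Main obstacle and equality.} The main obstacle is this weighted trace step. The coordinate-wise approach of Bucur--Martinet--Nahon gives only $n-1$ terms because one direction (the normal to $\Pi$) is not handled by the folding. I expect to need the freedom of choosing the frame after the center is fixed, together with the monotonicity of $g'^2-(n-1)g^2/\sn_\kappa^2$ (and $g^2$) in $r$ on each space form, which the cited authors establish. Rigidity follows by tracing equalities. Equality in transplantation forces the folded measure to be twice a ball, so $\Om$ is two balls of volume $|\Om|/2$, reflection-symmetric across $\Pi$. The Lipschitz hypothesis excludes density-one overlaps and lower-dimensional defects.
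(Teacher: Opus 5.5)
\textbf{The missing step is the trace estimate itself.} Your framework matches the paper's: the shifted Ritz bound $\sum_{j=2}^{n+1}\mu_j^{-1}\ge\tr(\mathbf K^{-1}\mathbf M)$ on $\{u_0,u_1\}^\perp$, folded Weinberger fields centred as in Freitas--Laugesen and Bucur--Martinet--Nahon, and a fold density bounded by $2$. But the step you call the main obstacle is the actual content of the proof, and you do not supply it.

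The scalar transplantation you state gives only $\tr\mathbf M\ge 2A_R$ and $\tr\mathbf K\le 2\lambda A_R$. These two bounds cannot force $\tr(\mathbf K^{-1}\mathbf M)\ge n/\lambda$: diagonal $\mathbf M,\mathbf K$ that put almost all mass and energy in one direction have $\tr(\mathbf K^{-1}\mathbf M)$ close to $1/\lambda$.

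Your Cauchy--Schwarz reductions do not repair this. The version with $\tr(\mathbf M\mathbf K\mathbf M)$ is not homogeneous in $\mathbf M$ and is false (take $\mathbf K=\Id$, $\mathbf M=t\Id$, $t<1$); the correct form has $\tr(\mathbf K\mathbf M)$. Moreover, once $\mathbf K$ is diagonal, $\tr(\mathbf K^{-1}\mathbf M)=\sum_i\mathbf M_{ii}/\mathbf K_{ii}$ exactly, so Cauchy--Schwarz only discards information. What is needed is directional information that couples $\mathbf M$ and $\mathbf K$. Neither ``You--Zhang's device'' nor a monotonicity of $g'^2-(n-1)g^2/\sn_\kappa^2$ provides it, and the paper uses no monotonicity of $g'$ at all.

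\textbf{The missing idea} is a single trace-free defect matrix that enters both estimates with opposite signs. Set $\mathbf Z=\int(\alpha_H-2\cdot\1_{B_R^\kappa})\theta\theta^\transpose\,d\nu$; then $\tr\mathbf Z=0$ because $\int\alpha_H\,d\nu=|\Om|=2|B_R^\kappa|$. Write $\mathbf M=\int g^2\theta\theta^\transpose\alpha_H\,d\nu$ and $\mathbf K=\int\bigl[g'^2\theta\theta^\transpose+\frac{g^2}{\sn_\kappa^2}(\Id-\theta\theta^\transpose)\bigr]\alpha_H\,d\nu$, and let $a=2A_R/n$, $c=g(R)^2$, $d=g(R)^2/\sn_\kappa(R)^2$.
\begin{itemize}
\item Monotonicity of $g^2$, which is constant beyond $R$, gives $\mathbf M\succeq a\Id+c\mathbf Z$.
\item By mass balance, $-d\mathbf Z=d\int(\alpha_H-2\cdot\1_{B_R^\kappa})(\Id-\theta\theta^\transpose)\,d\nu$. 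Then $g'=0$ beyond $R$ and the strict decrease of $g^2/\sn_\kappa^2$ give $\mathbf K\preceq\lambda a\Id-d\mathbf Z$.
\item Convexity of $z\mapsto(a+cz)/(\lambda a-dz)$ on $\{z<\lambda a/d\}$, together with $\tr\mathbf Z=0$, gives $\tr(\mathbf K^{-1}\mathbf M)\ge n/\lambda$, with equality only if $\mathbf Z=\mathbf 0$. This is He--Li--Tang's lemma. In your frame diagonalising $\mathbf K$ it is just Jensen applied to the zero-sum entries $\mathbf Z_{ii}$.
\item Equality in the two comparisons forces $\alpha_H=2\cdot\1_{B_R^\kappa}$.
\end{itemize}

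\textbf{Further gaps on $\Sph^n$.}
\begin{itemize}
\item Extending $g$ by the constant $g(R)$ up to $r=\pi$ makes $g(r)\theta$ discontinuous at the antipode of the centre. The paper, like Bucur--Martinet--Nahon, uses $g(\pi-r)$ there.
\item Folded mass can lie farther than $\pi/2$ from the centre, where $g^2/\sin^2r$ increases and transplantation breaks down. The paper pushes the lower-hemisphere part of the folded mass to the antipodes, which is legitimate because the integrands are even. This keeps the density at most $2$ on $\Sph^n_+$.
\item Spherical rigidity needs a step your sketch omits: the centred fold interface cannot cut the ball. This uses that in the equality case the $P_i$ are eigenfunctions, hence $C^1$ inside $\Om$, whereas the fold would flip their normal derivative across the interface.
\end{itemize}
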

The proof of \cref{thm:main} can be formulated through global matrix comparisons. Rather than disintegrating the relevant integrals along individual geodesic rays and applying one-dimensional mass transplantation ray by ray, we retain the global integral representations of the mass and energy matrices and obtain the required estimates directly. This formulation also provides a convenient framework for the analysis of the equality case.

\begin{remark}\label{rem:harmonic-mean}
    Define the harmonic mean of the eigenvalue cluster $\mu_2(\Om),\ldots,\mu_{n+1}(\Om)$ by
    \begin{equation*}
        H_{2,n+1}(\Om)=\frac{n}{\sum_{j=2}^{n+1}\mu_j(\Om)^{-1}}.
    \end{equation*}
    Then \eqref{eq:main} is equivalently written as
    \begin{equation*}
        H_{2,n+1}(\Om)\leq\mu_1(B_R^\kappa).
    \end{equation*}
    Equality holds if and only if $\Om$ is the disjoint union of two geodesic balls, each of volume $\vert\Om\vert/2$.
\end{remark}

Since $\mu_j(\Om)\geq\mu_2(\Om)$ for $j=2,\ldots,n+1$, the reciprocal sum in \eqref{eq:main} is bounded above by $n/\mu_2(\Om)$. Hence \cref{thm:main} immediately yields the following sharp bound for the second nontrivial Neumann eigenvalue. The Euclidean case of this estimate was first established by Bucur and Henrot \cite{BucurHenrot2019}, while its hyperbolic and spherical counterparts were subsequently proved by Freitas and Laugesen \cite{FreitasLaugesen2022} and Bucur, Martinet and Nahon \cite{BucurMartinetNahon2025}, respectively.
\begin{corollary}\label{cor:second-Neumann-eigenvalue}
    Under the assumptions of \cref{thm:main},
    \begin{equation*}
        \mu_2(\Om)\leq\mu_1(B_R^\kappa).
    \end{equation*}
    Equality holds if and only if $\Om$ is the disjoint union of two geodesic balls, each of volume $\vert\Om\vert/2$.
\end{corollary}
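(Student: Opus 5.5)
This is immediate from \cref{thm:main}; the plan is a comparison of each reciprocal against the smallest one, followed by tracking the equality case.

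\emph{Inequality.} Write $\mu_2=\mu_2(\Om)$. If $\mu_2(\Om)=0$, the inequality $\mu_2(\Om)\leq\mu_1(B_R^\kappa)$ is trivial. Otherwise, monotonicity of the enumeration gives $0<\mu_2(\Om)\leq\mu_j(\Om)$ for $2\leq j\leq n+1$. Hence each reciprocal satisfies $1/\mu_j(\Om)\leq 1/\mu_2(\Om)$. Summing and combining with \eqref{eq:main} yields
\[
    \frac{n}{\mu_1(B_R^\kappa)}\leq\sum_{j=2}^{n+1}\frac1{\mu_j(\Om)}\leq\frac{n}{\mu_2(\Om)},
\]
which rearranges to $\mu_2(\Om)\leq\mu_1(B_R^\kappa)$. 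Here $\mu_1(B_R^\kappa)>0$ because a ball is connected; in the spherical case this uses that $0<|B_R^\kappa|<|\Sph^n|$.

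\emph{Equality.} Suppose $\mu_2(\Om)=\mu_1(B_R^\kappa)$. Then $\mu_2(\Om)>0$, and both inequalities in the chain above must be equalities. In particular, equality holds in \eqref{eq:main}. The rigidity part of \cref{thm:main} then says that $\Om$ is the disjoint union of two geodesic balls, each of volume $|\Om|/2$.

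Conversely, suppose $\Om$ is such a union of two balls. The ball-level statement in \eqref{eq:main} gives $\mu_2(B_R^\kappa\sqcup B_R^\kappa)=\mu_1(B_R^\kappa)$, since the zero eigenspace is two-dimensional and the next eigenvalue is $\mu_1(B_R^\kappa)$. So equality holds for such $\Om$.

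The only point needing care is the zero-eigenvalue convention (reciprocal of zero read as $+\infty$). It matters only when $\mu_2(\Om)=0$, and that case was disposed of separately at the start; no step here is a genuine obstacle.
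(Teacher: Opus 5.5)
Your proof is correct and matches the paper's argument: the paper also bounds the reciprocal sum in \eqref{eq:main} above by $n/\mu_2(\Om)$ using $\mu_j(\Om)\geq\mu_2(\Om)$ for $2\leq j\leq n+1$. The paper leaves the equality case implicit; your treatment of it (forcing equality in \eqref{eq:main}, invoking the rigidity part of \cref{thm:main}, and using $\mu_1(B_R^\kappa)>0$ to exclude $\mu_2(\Om)=0$) supplies exactly the details it omits.
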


When $\Om=\Om_1\sqcup\Om_2$ has exactly two connected components, the zero eigenvalue has multiplicity two, and hence
\begin{equation*}
    \mu_2(\Om)=\min\bigl\{\mu_1(\Om_1),\mu_1(\Om_2)\bigr\}.
\end{equation*}
Thus \cref{cor:second-Neumann-eigenvalue} admits the following natural componentwise formulation.
\begin{corollary}\label{cor:two-component-domains}
    Under the assumptions of \cref{thm:main}, suppose that $\Om=\Om_1\sqcup\Om_2$, where $\Om_1$ and $\Om_2$ are the two connected components of $\Om$. Then
    \begin{equation*}
        \min\bigl\{\mu_1(\Om_1),\mu_1(\Om_2)\bigr\}\leq\mu_1(B_R^\kappa), \qquad \vert B_R^\kappa\vert=\frac{\vert\Om\vert}{2}.
    \end{equation*}
    Equality holds if and only if $\Om_1$ and $\Om_2$ are equal geodesic balls.
\end{corollary}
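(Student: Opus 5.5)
The plan is to derive this statement directly from \cref{cor:second-Neumann-eigenvalue}, using the spectral decomposition of a disjoint union. No new trial functions should be needed.

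First I will justify the identity $\mu_2(\Om)=\min\{\mu_1(\Om_1),\mu_1(\Om_2)\}$.
\begin{itemize}
\item Since $\Om_1$ and $\Om_2$ are disjoint open sets, $H^1(\Om)=H^1(\Om_1)\oplus H^1(\Om_2)$, and the Neumann Rayleigh quotient splits accordingly.
\item Hence the Neumann spectrum of $\Om$, counted with multiplicity, is the union of the spectra of $\Om_1$ and $\Om_2$.
\item Each $\Om_i$ is a bounded connected open set. Its boundary is Lipschitz, being a union of components of $\partial\Om$, so Rellich compactness gives a discrete spectrum.
\item By connectedness, the zero eigenvalue of $\Om_i$ is simple with eigenspace the constants, so $\mu_0(\Om_i)=0<\mu_1(\Om_i)$.
\item Merging the two lists therefore gives $\mu_0(\Om)=\mu_1(\Om)=0$ and $\mu_2(\Om)=\min\{\mu_1(\Om_1),\mu_1(\Om_2)\}$.
\end{itemize}

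Second, $\Om$ satisfies the hypotheses of \cref{thm:main}. Substituting the identity into the inequality $\mu_2(\Om)\le\mu_1(B_R^\kappa)$ of \cref{cor:second-Neumann-eigenvalue} gives the claimed bound.

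For the equality case, suppose $\min\{\mu_1(\Om_1),\mu_1(\Om_2)\}=\mu_1(B_R^\kappa)$. Then $\mu_2(\Om)=\mu_1(B_R^\kappa)$, and the rigidity part of \cref{cor:second-Neumann-eigenvalue} shows that $\Om$ is a disjoint union of two geodesic balls, each of volume $\vert\Om\vert/2$. Each ball is connected and open, so the two balls are exactly the two connected components of $\Om$. Thus $\Om_1$ and $\Om_2$ are congruent geodesic balls of radius $R$.

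Conversely, if $\Om_1$ and $\Om_2$ are equal geodesic balls, each has volume $\vert\Om\vert/2$, so each is congruent to $B_R^\kappa$. Then $\mu_1(\Om_i)=\mu_1(B_R^\kappa)$ for $i=1,2$ by isometry invariance, and equality holds.

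The only point requiring care is the spectral splitting. Specifically, one must check that $\mu_1(\Om_i)>0$, so that exactly two zero eigenvalues occur in the merged list. This follows from connectedness together with the compact embedding $H^1(\Om_i)\hookrightarrow L^2(\Om_i)$, which holds because each $\Om_i$ is bounded with Lipschitz boundary. Everything else is immediate from \cref{cor:second-Neumann-eigenvalue}.
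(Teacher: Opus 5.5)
Your proposal is correct and follows the paper's route: the paper also notes that two components force a double zero eigenvalue, so $\mu_2(\Om)=\min\{\mu_1(\Om_1),\mu_1(\Om_2)\}$, and then reads off both the inequality and the rigidity from \cref{cor:second-Neumann-eigenvalue}. Your justification of the spectral splitting, via $H^1(\Om)=H^1(\Om_1)\oplus H^1(\Om_2)$ and simplicity of the zero eigenvalue on each connected component, fills in details the paper leaves implicit.
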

A particularly useful specialization arises when the two components are isometric. Their first positive Neumann eigenvalues and volumes then coincide, so \cref{cor:two-component-domains} reduces to a one-domain estimate.

\begin{corollary}\label{cor:disjoint-isometric-copies}
    Under the assumptions of \cref{thm:main}, suppose that $\Om=\Om_1\sqcup T(\Om_1)$, where $T$ is an isometry of $\M_\kappa^n$ and the two copies are disjoint. Then
    \begin{equation*}
        \mu_1(\Om_1)\leq\mu_1(B_R^\kappa), \qquad \vert B_R^\kappa\vert=\vert\Om_1\vert.
    \end{equation*}
    Equality holds if and only if $\Om_1$ is a geodesic ball.
\end{corollary}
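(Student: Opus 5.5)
The corollary follows directly from \cref{cor:two-component-domains}, so I will reduce to it rather than return to the spectral argument. Set $\Om_2=T(\Om_1)$. Because $T$ is an isometry of $\M_\kappa^n$, it maps $\Om_1$ onto $\Om_2$ and preserves the Laplacian, the Neumann boundary condition and the Riemannian measure. Hence $\mu_1(\Om_2)=\mu_1(\Om_1)$ and $\vert\Om_2\vert=\vert\Om_1\vert$. The two copies are disjoint, so $\vert\Om\vert=2\vert\Om_1\vert$, and the normalization $\vert B_R^\kappa\vert=\vert\Om\vert/2$ becomes $\vert B_R^\kappa\vert=\vert\Om_1\vert$.

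Before invoking \cref{cor:two-component-domains} I have to check its hypotheses. The statement as written implicitly takes $\Om_1$ connected; otherwise $\mu_1(\Om_1)=0$ and the inequality holds trivially. In that case equality would force $\mu_1(B_R^\kappa)=0$, which is impossible, so the rigidity claim also holds.

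For connected $\Om_1$ I also need the two copies to be separated, so that $\Om$ has exactly the two components $\Om_1$ and $T(\Om_1)$. This is where the Lipschitz-boundary hypothesis of \cref{thm:main} on $\Om$ does the work. If the closures touched, $\Om$ would fail to be Lipschitz at the contact points, and the two components would still be the connected components of $\Om$. In the spherical case the constraint $\vert\Om\vert<\vert\Sph^n\vert$ is part of the assumptions, so nothing more is needed there.

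With the hypotheses in place, \cref{cor:two-component-domains} gives
\[
\mu_1(\Om_1)=\min\{\mu_1(\Om_1),\mu_1(\Om_2)\}\le\mu_1(B_R^\kappa),
\]
with equality exactly when $\Om_1$ and $\Om_2$ are equal geodesic balls. This happens precisely when $\Om_1$ is a geodesic ball, since then $T(\Om_1)$ is automatically a ball of the same radius. Conversely, if $\Om_1$ is a ball of volume $\vert B_R^\kappa\vert$, it is isometric to $B_R^\kappa$ and equality holds.

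There is no analytic difficulty in this argument. The only point that needs care is confirming that the hypotheses of \cref{thm:main} still hold for $\Om$ as a union of two copies, namely the Lipschitz boundary and connectedness of each piece; I handle this as described above.
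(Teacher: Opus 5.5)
Your proposal is correct and follows the paper's route: isometry invariance gives $\mu_1(T(\Om_1))=\mu_1(\Om_1)$ and $\vert T(\Om_1)\vert=\vert\Om_1\vert$, so \cref{cor:two-component-domains} reduces to the one-domain estimate, and your separate treatment of a disconnected $\Om_1$ (where $\mu_1(\Om_1)=0$ and equality is impossible) is a harmless addition. Your appeal to the Lipschitz hypothesis to separate the closures is unnecessary, because two disjoint connected open sets are automatically the two connected components of their union.
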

Thus the corresponding Szeg\H{o}--Weinberger inequality is recovered within this symmetric two-component setting.

Applying the power-mean inequality to the reciprocals of the same eigenvalue cluster gives a further consequence.
\begin{corollary}\label{cor:higher-reciprocal-moments}
    Under the assumptions of \cref{thm:main}, for every $s\geq1$,
    \begin{equation*}
        \sum_{j=2}^{n+1}\frac1{\mu_j(\Om)^s} \geq \frac{n}{\mu_1(B_R^\kappa)^s}.
    \end{equation*}
    Equality holds if and only if $\Om$ is the disjoint union of two geodesic balls, each of volume $\vert\Om\vert/2$.
\end{corollary}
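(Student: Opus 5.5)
This is a direct consequence of \cref{thm:main} combined with the power-mean (or Jensen) inequality. Set $x_j=\mu_j(\Om)^{-1}\in(0,+\infty]$ for $j=2,\dots,n+1$.

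\emph{Degenerate case.} If some $x_j=+\infty$, that is, $\mu_j(\Om)=0$, then the left-hand side is $+\infty$. The inequality is then trivial and strict. So I will assume every $x_j$ is finite and positive.

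\emph{Inequality.} The map $t\mapsto t^s$ is convex on $[0,\infty)$ for $s\ge1$. Jensen's inequality for the uniform average over the $n$ values gives
\[
\frac1n\sum_{j=2}^{n+1}x_j^s\;\ge\;\Bigl(\frac1n\sum_{j=2}^{n+1}x_j\Bigr)^{s}.
\]
By \eqref{eq:main} the inner average is at least $\mu_1(B_R^\kappa)^{-1}$. Since $t\mapsto t^s$ is increasing, the right-hand side is at least $\mu_1(B_R^\kappa)^{-s}$. Multiplying by $n$ gives the claimed bound.

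\emph{Equality case.} Suppose equality holds. Then both inequalities in the chain are equalities. In particular
\[
\Bigl(\tfrac1n\sum x_j\Bigr)^s=\mu_1(B_R^\kappa)^{-s}.
\]
Strict monotonicity of $t\mapsto t^s$ then forces equality in \eqref{eq:main}. By the rigidity part of \cref{thm:main}, $\Om$ is a disjoint union of two geodesic balls of volume $\vert\Om\vert/2$. We do not need the case analysis of Jensen's inequality (which is not strictly convex when $s=1$), since the second equality alone yields the conclusion.

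\emph{Converse.} If $\Om=B_R^\kappa\sqcup B_R^\kappa$, then $\mu_j(\Om)=\mu_1(B_R^\kappa)$ for $j=2,\dots,2n+1$. This holds because $\mu_1$ of a ball has multiplicity $n$, so the two balls together contribute $2n$ copies. Hence every $x_j$ equals $\mu_1(B_R^\kappa)^{-1}$ and equality holds.

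There is no genuine obstacle here. The only points needing care are the convention $1/0=+\infty$ and the remark that the equality case is inherited through strict monotonicity rather than strict convexity.
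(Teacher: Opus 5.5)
Your proof is correct and takes the same approach as the paper: apply the power-mean inequality to the reciprocals $\mu_j(\Om)^{-1}$, $j=2,\dots,n+1$, and combine it with \cref{thm:main}. Your treatment of the degenerate case $\mu_2(\Om)=0$ is sound. So is passing the equality case through strict monotonicity of $t\mapsto t^s$, which gives equality in \eqref{eq:main}, rather than through strict convexity.
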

Beyond these finite reciprocal moments, Freitas and Laugesen \cite[Section~1.3]{FreitasLaugesen2022} observed that, in the Euclidean and hyperbolic settings, it remains open whether, for $s>n/2$, the Neumann spectral zeta function
\[
    \sum_{j=1}^{\infty}\frac{1}{\mu_j(\Om)^{s}}
\]
is minimized by a ball among domains of prescribed volume.

The paper is organized as follows. 
In \cref{sec:Space-form notation and radial eigenfunctions}, we introduce unified polar notation for the three space forms and record the properties of the extended degree-one radial trial field. 
In \cref{sec:Variational and matrix tools}, we collect the shifted trace-form Ritz principle and the trace-free matrix inequality.
In \cref{sec:Folding constructions on space forms}, we construct folded and centered trial fields in the Euclidean, hyperbolic and spherical spaces, respectively. 
The proof of \cref{thm:main} is given in \cref{sec:Proof of main theorem}, and the equality case is completed in \cref{sec:Equality and geometric rigidity}.

\section{Space-form notation and radial eigenfunctions} \label{sec:Space-form notation and radial eigenfunctions}
For $\kappa\in\{-1,0,1\}$, let $\Om\subset\M_\kappa^n$ be a nonempty bounded open set with Lipschitz boundary, and assume that $0<|\Omega|<|\mathbb{S}^n|$ when $\kappa=1$. The embedding $H^1(\Om)\hookrightarrow L^2(\Om)$ is compact, and the Neumann quadratic form has a discrete spectrum
\[
    0=\mu_0(\Om)\leq\mu_1(\Om)\leq\mu_2(\Om)\leq\cdots\longrightarrow\infty.
\]
If $\mu_2(\Om)=0$, then the theorem is immediate. Hence, from now on, we may assume
\begin{equation*}
        \mu_2(\Om)>0.
\end{equation*}
Under this assumption $\Om$ has at most two connected components. 

Fix an origin $o\in\M_\kappa^n$.  Throughout the paper, geodesic polar coordinates about $o$ are denoted by
\begin{equation*}
    x=\exp_o(r\theta),\qquad (r,\theta)\in(0,\mathcal{L}_\kappa)\times\Sph^{n-1},\qquad \mathcal{L}_\kappa=
    \begin{cases}
        \ +\infty,&\kappa\leq0,\\
        \ \pi,&\kappa=1.
    \end{cases}
\end{equation*}
Thus $r=d(o,x)$ is the Riemannian distance from the origin and $\theta\in\Sph^{n-1}\subset T_o\M_\kappa^n$ is the angular variable. Define
\begin{equation*}
    \sn_\kappa(r)=
    \begin{cases}
        \ \sinh r,&\kappa=-1, \\
        \       r,&\kappa=0,  \\
        \  \sin r,&\kappa=1.
    \end{cases}
\end{equation*}
The Riemannian metric, volume element and Laplacian have the unified form
\begin{equation*}
    \begin{aligned}
        g_\kappa&=\dd r^2+\sn_\kappa(r)^2g_{\Sph^{n-1}}, \\
        \dd\nu&=\sn_\kappa(r)^{n-1}\dd r\dd\sigma(\theta),\\
        \Delta&=\partial_r^2+(n-1)\frac{\sn_\kappa'(r)}{\sn_\kappa(r)}\partial_r+\frac1{\sn_\kappa(r)^2}\Delta_{\Sph^{n-1}}.
    \end{aligned}
\end{equation*}
Let $B_r^\kappa\subset\M_\kappa^n$ be a geodesic ball of radius $r$. 
Let
\begin{equation*}
    L_\kappa=
    \begin{cases}
        \ +\infty, &\kappa\leq0,\\
        \ \pi/2,   &\kappa=1.
    \end{cases}
\end{equation*}
For convenience, we write
\begin{equation*}
    B^\kappa_{L_\kappa}=
    \begin{cases}
        \ \M_\kappa^n,&\kappa\leq0,\\
        \ B_{\pi/2}^1,&\kappa=1.
    \end{cases}
\end{equation*}
Thus \(B^\kappa_{L_\kappa}\) is precisely the polar region on which the global matrix comparisons will be performed.
Let $R$ be the comparison radius determined by $\vert B_R^\kappa \vert = \vert\Omega\vert/2$. Thus, $R<L_\kappa$. Put
\begin{equation*}
    \lambda=\mu_1(B_R^\kappa).
\end{equation*}
By separation of variables, the first nonzero Neumann eigenspace of $B_R^\kappa$ is the degree-one spherical-harmonic sector. More precisely, it has multiplicity $n$ and is spanned by
\begin{equation*}
    G_i(r,\theta)=g(r)\theta_i,\qquad i=1,\dots,n,
\end{equation*}
where $\theta_i$, $i=1,\dots,n$, are the coordinate functions of $\theta\in\Sph^{n-1}\subset\mathbb{R}^n$. The radial factor $g$ is uniquely determined up to multiplication by a positive constant and satisfies
\begin{equation}\label{eq:radial-ode}
    g''+(n-1)\frac{\sn_\kappa'}{\sn_\kappa}g'+\left(\lambda-\frac{n-1}{\sn_\kappa^2}\right)g=0, \qquad g(0)=0,\qquad g'(R)=0.
\end{equation}
Moreover,
\begin{equation*}
    g(r)>0 \quad \text{and} \quad g'(r)>0, \qquad 0<r<R.
\end{equation*}
For the Euclidean and hyperbolic cases, these facts are recorded in \cite[Propositions~A.1 and~A.2]{FreitasLaugesen2022}; see also \cite[p.~562]{AshbaughBenguria1993} for the Euclidean case and \cite[Sections~3 and~6]{AshbaughBenguria1995} for the hyperbolic case. For spherical caps, see \cite[Section~3]{BucurMartinetNahon2025}.

We use the same symbol $g$ for its Weinberger extension to the full polar range:
\begin{equation}\label{eq:g-extension}
    g(r)=
    \begin{cases}
        \ g(R),&\kappa\leq0,\quad r\geq R,\\
        \ g(R),&\kappa=1,\quad R\leq r\leq\pi-R,\\
        \ g(\pi-r),&\kappa=1,\quad \pi-R\leq r\leq\pi.
    \end{cases}
\end{equation}
In the last line the right-hand side is the solution already defined on $[0,R]$.  In particular, the spherical extension satisfies
\begin{equation*}
    g(\pi-r)=g(r).
\end{equation*}
The two radial quantities entering the mass and energy matrices have opposite monotonicity properties, as recorded below.

\begin{lemma}\label{lem:radial-monotonicity}
For each $\kappa$, 
\[
    g(r)^2\quad\text{is non-decreasing on }[0,L_\kappa),
\]
whereas
\[
    \frac{g(r)^2}{\sn_\kappa(r)^2}\quad\text{is strictly decreasing on }(0,L_\kappa).
\]
If $\kappa=1$, then on the interval $[\pi/2,\pi]$, $g^2$ is non-increasing and $g^2/\sn_\kappa^2$ is strictly increasing by the symmetries $g(\pi-r)=g(r)$ and $\sin(\pi-r)=\sin r$.
\end{lemma}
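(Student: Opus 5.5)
Both claims reduce to statements on $[0,R]$, since on the remaining range $g$ is either constant or obtained by reflection. For the first claim, $g>0$ and $g'>0$ on $(0,R)$ give $(g^2)'=2gg'>0$ there. Beyond $R$ the extension \eqref{eq:g-extension} is constant: for $\kappa\le0$ on $[R,\infty)$, and for $\kappa=1$ on $[R,\pi-R]\supset[R,\pi/2)$, because $R<\pi/2$. Continuity at $R$ then makes $g^2$ non-decreasing on $[0,L_\kappa)$.

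For the second claim, on $[R,L_\kappa)$ we have $g^2/\sn_\kappa^2=g(R)^2/\sn_\kappa^2$. This is strictly decreasing because $\sn_\kappa$ is strictly increasing and positive there ($\sinh$ and $r$ on $(0,\infty)$, $\sin$ on $(0,\pi/2)$). It therefore suffices to show that $q:=g/\sn_\kappa$ is strictly decreasing on $(0,R]$, and I would aim for the pointwise inequality
\[
    h(r):=g'(r)\sn_\kappa(r)-g(r)\sn_\kappa'(r)<0,\qquad 0<r\le R .
\]
At $r=R$ this is immediate, since $g'(R)=0$, $g(R)>0$ and $\sn_\kappa'(R)>0$ (for $\kappa=1$ because $R<\pi/2$). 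Near $0$, $g(r)\sim cr$ by the regular-singular behaviour of \eqref{eq:radial-ode}, so $h(0)=0$, and a Taylor expansion should give $h(r)\approx -\tfrac{\lambda+\kappa(n-1)}{n+2}\,c\,r^3<0$ for small $r$ (I will verify the sign of this leading coefficient, at least for $\kappa\ge0$).

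The core step is a differential identity for $h$. Using $\sn_\kappa''=-\kappa\,\sn_\kappa$ and the ODE \eqref{eq:radial-ode} to eliminate $g''$, I expect
\[
    h'=g''\sn_\kappa+\kappa g\,\sn_\kappa
    =-(n-1)\frac{\sn_\kappa'}{\sn_\kappa}g'\sn_\kappa-\lambda g\,\sn_\kappa+\frac{(n-1)g}{\sn_\kappa}+\kappa g\,\sn_\kappa .
\]
With $g'\sn_\kappa=h+g\,\sn_\kappa'$ and $(\sn_\kappa')^2+\kappa\,\sn_\kappa^2=1$, the two $(n-1)$-terms should combine, giving
\[
    h'=-(n-1)\frac{\sn_\kappa'}{\sn_\kappa}h-\bigl(\lambda-n\kappa\bigr)g\,\sn_\kappa .
\]
Equivalently,
\[
    (\sn_\kappa^{n-1}h)'=-(\lambda-n\kappa)\,g\,\sn_\kappa^{n}.
\]
Since $\sn_\kappa^{n-1}h\to0$ as $r\to0$, integrating from $0$ gives $h<0$ on $(0,R]$, provided $\lambda>n\kappa$ and $g>0$.

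The main obstacle is this condition $\lambda>n\kappa$, which is automatic for $\kappa\le0$ and nontrivial only on the sphere. There I would use the hypothesis $R<\pi/2$, i.e.\ $|\Om|<|\Sph^n|$. The cap $B_{\pi/2}^1$ (the hemisphere) has first Neumann eigenvalue exactly $n$, with eigenfunctions $\sin r\,\theta_i$, and the first degree-one Neumann eigenvalue of a cap is strictly decreasing in the radius (cf.\ \cite[Section~3]{BucurMartinetNahon2025}), so $\lambda=\mu_1(B^1_R)>n$. If that monotonicity is not available in the literature, I would argue by Sturm comparison: $\sin r$ solves \eqref{eq:radial-ode} with $\lambda=n$ and has its first critical point at $\pi/2$, so the Neumann condition $g'(R)=0$ at $R<\pi/2$ forces $\lambda>n$.

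Finally, for $\kappa=1$ the statements on $[\pi/2,\pi]$ follow from $g(\pi-r)=g(r)$ and $\sin(\pi-r)=\sin r$: composing with $r\mapsto\pi-r$ reverses monotonicity.
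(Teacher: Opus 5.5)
Your proof is correct, but it takes a different route from the paper. The paper's proof of this lemma is purely bibliographic: it cites Ashbaugh--Benguria, Xu and Freitas--Laugesen for $\kappa\le0$ and Bucur--Martinet--Nahon for $\kappa=1$, and then uses the same reflection step you use on $[\pi/2,\pi]$.

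You instead give one self-contained argument for all three curvatures. It rests on the Wronskian $h=g'\sn_\kappa-g\sn_\kappa'$ of $g$ against $\sn_\kappa$; note that $\sn_\kappa$ itself solves \eqref{eq:radial-ode} with $\lambda$ replaced by $n\kappa$. I checked the identity $(\sn_\kappa^{n-1}h)'=-(\lambda-n\kappa)\,g\,\sn_\kappa^{n}$, and it is correct.

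The paper's route buys brevity. Yours buys transparency: it shows that strict monotonicity of $g/\sn_\kappa$ on $(0,R]$ is governed exactly by the sign of $\lambda-n\kappa$, and therefore exactly where $|\Om|<|\Sph^n|$ enters. Both routes take $g>0$ and $g'>0$ on $(0,R)$ from the literature.

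There are two small corrections.

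First, the step you call the main obstacle is already settled by your own identity. Integrating from $0$ to $R$ gives
\[
\sn_\kappa(R)^{n-1}h(R)=-(\lambda-n\kappa)\int_0^R g\,\sn_\kappa^{n}\dd r .
\]
The left side equals $-g(R)\sn_\kappa(R)^{n-1}\sn_\kappa'(R)<0$, since $g'(R)=0$ and $R<L_\kappa$. The integral on the right is positive, so $\lambda>n\kappa$ follows with no reference to cap eigenvalues; this is just your Sturm fallback written out. It is better than the citation you propose, because stated without restriction that monotonicity claim is false. The degree-one cap eigenvalue equals $n$ at $R=\pi/2$ and is strictly decreasing there. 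It then tends back to $n$ as $R\to\pi$, so only monotonicity on $[R,\pi/2]$ would be usable.

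Second, your small-$r$ coefficient is wrong. The identity (or a direct Taylor expansion) gives $h(r)\sim-\frac{\lambda-n\kappa}{n+2}\,c\,r^3$, not $-\frac{\lambda+\kappa(n-1)}{n+2}\,c\,r^3$. Your version would make the sign look automatic for $\kappa\ge0$ and hide the real condition $\lambda>n$ on the sphere.

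Since your argument only uses $\sn_\kappa^{n-1}h\to0$ as $r\to0$, neither point affects its validity.
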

\begin{proof}
For the Euclidean and hyperbolic cases, these properties follow from \cite[Sections~3 and~6]{AshbaughBenguria1995}, \cite[Lemma~2]{Xu1995}, and \cite[Propositions~A.1 and~A.2]{FreitasLaugesen2022}. 
For the spherical case, see \cite[Section~3, pp.~53--54]{BucurMartinetNahon2025}. 
The assertions on $[\pi/2,\pi]$ follow immediately from the stated symmetries.
\end{proof}

Define the common degree-one radial field centered at $o$ by
\begin{equation}\label{eq:radial-field}
    \mathcal{G}(r,\theta)=g(r)\theta \in T_o\M_\kappa^n\simeq\R^n,
\end{equation}
whose $n$ components are $\{G_i\}_{i=1}^n$. Direct computation yields
\begin{equation}\label{eq:GG-and-nablaGnablaG}
    \begin{aligned}
        &\mathcal{G}\mathcal{G}^\transpose = \big(G_iG_j\big)_{ij} = g(r)^2\theta\theta^\transpose, \\
        &\langle\nabla\mathcal{G},\nabla\mathcal{G}^\transpose\rangle = \Big(\langle\nabla G_i,\nabla G_j\rangle\Big)_{ij} = g'(r)^2\theta\theta^\transpose+\frac{g(r)^2}{\sn_\kappa(r)^2}\Big(\Id-\theta\theta^\transpose\Big).
    \end{aligned}
\end{equation}

Set
\begin{equation*}
    \begin{aligned}
        A_R:=\int_{B^\kappa_R} g(r)^2 \dd\nu, \qquad Q_R:=\int_{B^\kappa_R} g'(r)^2 \dd\nu, \qquad H_R:=\int_{B^\kappa_R} \frac{g(r)^2}{\sn_\kappa(r)^2} \dd\nu.
    \end{aligned}
\end{equation*}
Since the restriction of each \(G_i\) to \(B_R^\kappa\) is an eigenfunction with eigenvalue \(\lambda=\mu_1(B_R^\kappa)\), we have the identity:
\begin{equation}\label{eq:energy-identity}
    Q_R+(n-1)H_R=\lambda A_R.
\end{equation}

\section{Variational and matrix tools}\label{sec:Variational and matrix tools}
The proof uses two finite-dimensional statements. The first is the shifted version of Hersch's trace variational principle \cite{Hersch1961,HileXu1993}.  The second is the trace-free convexity lemma that closes the He--Li--Tang argument \cite{HeLiTang2026}.

Let $u_0(={\rm{const.}})$ and $u_1$ be orthonormal eigenfunctions corresponding to $\mu_0(\Omega)$ and $\mu_1(\Omega)$ respectively.
For real symmetric matrices, we write $\mathbf A\succeq\mathbf B$ when $\mathbf A-\mathbf B$ is positive semidefinite and $\mathbf A\succ\mathbf B$ when $\mathbf A-\mathbf B$ is positive definite.

\begin{lemma}\label{lem:shifted-ritz}
    Let $P_1,\dots,P_n\in H^1(\Omega)$ be linearly independent in $L^2(\Omega)$ and satisfy
    \[
        \int_\Omega P_i u_0 \dd\nu = \int_\Omega P_i u_1 \dd\nu = 0, \qquad i=1,\dots,n.
    \]
    Set
    \[
        \mathbf M_{ij}=\int_\Omega P_iP_j \dd\nu,\qquad\mathbf K_{ij}=\int_\Omega\langle\nabla P_i,\nabla P_j\rangle \dd\nu.
    \]
    If $\mathbf K$ is positive definite, then
    \begin{equation*}
        \sum_{j=2}^{n+1}\frac1{\mu_j(\Omega)}
        \geq \tr(\mathbf K^{-1}\mathbf M).
    \end{equation*}
\end{lemma}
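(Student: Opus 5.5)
The plan is to reduce the lemma to the classical Hersch-type trace principle by diagonalizing the pair $(\mathbf K,\mathbf M)$. Since the $P_i$ are linearly independent in $L^2(\Omega)$, the Gram matrix $\mathbf M$ is positive definite, and $\mathbf K$ is positive definite by hypothesis. Choose an invertible matrix $C$ with $C^\transpose\mathbf M C=\Id$ and $C^\transpose\mathbf K C=\operatorname{diag}(\beta_1,\dots,\beta_n)$, where $0<\beta_1\le\cdots\le\beta_n$ are the generalized eigenvalues of $\mathbf K$ relative to $\mathbf M$. Put $Q_k=\sum_i C_{ik}P_i$. The $Q_k$ are $L^2$-orthonormal, $\int_\Omega\langle\nabla Q_k,\nabla Q_l\rangle\dd\nu=\beta_k\delta_{kl}$, and each $Q_k$ is still orthogonal to $u_0$ and $u_1$. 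Moreover $\tr(\mathbf K^{-1}\mathbf M)$ is invariant under this congruence:
\[
\tr(\mathbf K^{-1}\mathbf M)=\tr\big((C^\transpose\mathbf K C)^{-1}C^\transpose\mathbf M C\big)=\sum_{k=1}^n\beta_k^{-1}.
\]
It therefore suffices to prove $\beta_k\ge\mu_{k+1}(\Omega)$ for $k=1,\dots,n$. Summing reciprocals then gives $\sum_{j=2}^{n+1}\mu_j^{-1}\ge\sum_k\beta_k^{-1}$.

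To prove $\beta_k\ge\mu_{k+1}$, I will use the min-max characterization of the Neumann eigenvalues:
\[
\mu_{k+1}(\Omega)=\min_{\dim V=k+2}\ \max_{0\neq v\in V}\frac{\int|\nabla v|^2}{\int v^2}.
\]
Here the index is shifted because $\mu_0$ corresponds to a $1$-dimensional space. Take $V=\Span\{u_0,u_1,Q_1,\dots,Q_k\}$, which has dimension $k+2$ because the $Q$'s are orthonormal and orthogonal to $u_0,u_1$. For $v=a_0u_0+a_1u_1+\sum_{l\le k}c_lQ_l$, the cross terms in the energy vanish. Indeed, $\int\langle\nabla u_m,\nabla Q_l\rangle=\mu_m\int u_mQ_l=0$ by the weak eigenfunction equation with Neumann condition, valid for $Q_l\in H^1(\Omega)$. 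Also $\int\langle\nabla u_0,\nabla u_1\rangle=0$. Hence
\[
\int|\nabla v|^2=\mu_0a_0^2+\mu_1a_1^2+\sum_l\beta_lc_l^2\le\beta_k\int v^2,
\]
because $\mu_0,\mu_1\le\mu_2\le\cdots$. Here I need $\mu_1\le\beta_k$. That follows since $\beta_k\ge\beta_1=\int|\nabla Q_1|^2$, which by min-max on the span of $u_0,u_1,Q_1$ is at least $\mu_2\ge\mu_1$. More simply, $Q_1\perp\{u_0,u_1\}$ gives $\int|\nabla Q_1|^2\ge\mu_2$ directly from the variational characterization on the orthogonal complement of the first two eigenfunctions. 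Either way, the maximum of the Rayleigh quotient on $V$ is at most $\beta_k$, so $\mu_{k+1}\le\beta_k$.

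The only delicate point I expect is bookkeeping. First, when $\mu_1=0$ (two components), $u_1$ should be taken as an orthonormal eigenfunction in the zero eigenspace orthogonal to $u_0$, and the argument is unchanged. Second, if some $\mu_j=0$ with $j\ge2$, the left-hand side is $+\infty$ by the paper's convention, and the inequality is trivial. No further obstacle is anticipated, since the rest is the standard Hersch/Hile--Xu argument.
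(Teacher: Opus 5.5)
Your argument is correct and is essentially the paper's proof. The paper just invokes the Ritz--Poincaré argument of He--Li--Tang on the spectral subspace $\{u_0,u_1\}^{\perp}$, which amounts to your simultaneous diagonalization followed by $\beta_k\ge\mu_{k+1}$; equivalently, take $\Span\{Q_1,\dots,Q_k\}\subset\{u_0,u_1\}^{\perp}$ in the min-max for the restricted problem, whose eigenvalues are $\mu_2\le\mu_3\le\cdots$. The one justification to keep is the direct one, $\beta_1\ge\mu_2$ because $Q_1\perp u_0,u_1$. Your alternative via min-max on $\Span\{u_0,u_1,Q_1\}$ only gives $\mu_2\le\max(\mu_1,\beta_1)$, so it is circular.
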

\begin{proof}
    This is the shifted version of \cite[Lemma~2.1]{HeLiTang2026}, obtained by applying the same Ritz--Poincaré argument to the spectral subspace $\{u_0,u_1\}^{\perp}$.
\end{proof}

The following lemma is \cite[Lemma~2.2]{HeLiTang2026}.
\begin{lemma}\label{lem:trace-free-matrix-Jensen-inequality}
    Let $a,c,d,\lambda>0$, let $\mathbf Z$ be a real symmetric $n\times n$ matrix with $\tr\mathbf Z=0$, and suppose
    \begin{equation*}
        \mathbf M\succeq \mathbf 0,\qquad\mathbf M\succeq a\Id+c\mathbf Z,\qquad\mathbf 0\prec\mathbf K\preceq\lambda a\Id-d\mathbf Z.
    \end{equation*}
    Then
    \begin{equation*}
        \tr(\mathbf K^{-1}\mathbf M)\geq\frac n\lambda.
    \end{equation*}
    If equality holds, then $\mathbf Z=\mathbf 0$, $\mathbf M=a\Id$, and $\mathbf K=\lambda a\Id$.
\end{lemma}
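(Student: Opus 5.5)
The approach is to reduce to a commuting, diagonal situation and then apply one-variable Jensen to the eigenvalues of $\mathbf Z$. Set $\mathbf N:=\lambda a\Id-d\mathbf Z$. The hypothesis $\mathbf 0\prec\mathbf K\preceq\mathbf N$ gives $\mathbf N\succ\mathbf 0$. Operator monotonicity of inversion on positive definite matrices then gives $\mathbf K^{-1}\succeq\mathbf N^{-1}\succ\mathbf 0$.

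\emph{Step 1: replace $\mathbf K$ by $\mathbf N$.} Since $\mathbf M\succeq\mathbf 0$, the product of two positive semidefinite matrices has nonnegative trace, so $\tr\bigl((\mathbf K^{-1}-\mathbf N^{-1})\mathbf M\bigr)\geq0$. Hence
\[
\tr(\mathbf K^{-1}\mathbf M)\geq\tr(\mathbf N^{-1}\mathbf M).
\]

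\emph{Step 2: replace $\mathbf M$ by $a\Id+c\mathbf Z$.} Since $\mathbf N^{-1}\succ\mathbf 0$ and $\mathbf M-(a\Id+c\mathbf Z)\succeq\mathbf 0$, the same argument gives
\[
\tr(\mathbf N^{-1}\mathbf M)\geq\tr\bigl(\mathbf N^{-1}(a\Id+c\mathbf Z)\bigr).
\]

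\emph{Step 3: diagonalize.} The matrices $\mathbf N$ and $\mathbf Z$ commute, so we may diagonalize $\mathbf Z$ orthogonally with eigenvalues $z_1,\dots,z_n$. Then $\sum_i z_i=0$ and $\lambda a-dz_i>0$ for every $i$. The last trace becomes $\sum_i f(z_i)$, where
\[
f(z)=\frac{a+cz}{\lambda a-dz}=-\frac cd+\frac{a(1+c\lambda/d)}{\lambda a-dz},\qquad z<\lambda a/d.
\]
Because $a(1+c\lambda/d)>0$ and $z\mapsto(\lambda a-dz)^{-1}$ is strictly convex on this half-line, $f$ is strictly convex there. Jensen's inequality gives
\[
\sum_i f(z_i)\geq nf\Bigl(\tfrac1n\sum_i z_i\Bigr)=nf(0)=\frac n\lambda.
\]
Note that $a+cz_i$ may be negative; this does not matter, because only the convexity of $f$ is used.

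\emph{Equality case.} Suppose equality holds. Then equality holds in Jensen, and strict convexity of $f$ forces all $z_i=0$, so $\mathbf Z=\mathbf 0$ and $\mathbf N=\lambda a\Id$. Equality in Step 2 gives $\tr\bigl(\mathbf N^{-1}(\mathbf M-a\Id)\bigr)=(\lambda a)^{-1}\tr(\mathbf M-a\Id)=0$; since $\mathbf M-a\Id\succeq\mathbf 0$, this yields $\mathbf M=a\Id$. Equality in Step 1 then reads $a\,\tr(\mathbf K^{-1}-\mathbf N^{-1})=0$ with $\mathbf K^{-1}-\mathbf N^{-1}\succeq\mathbf 0$. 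Hence $\mathbf K^{-1}=\mathbf N^{-1}$, that is, $\mathbf K=\lambda a\Id$.

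The only steps needing care are the operator monotonicity of inversion and the positivity of the trace of a product of positive semidefinite matrices. For the latter, write $\tr(\mathbf A\mathbf B)=\tr(\mathbf B^{1/2}\mathbf A\mathbf B^{1/2})\geq0$. This identity also gives the vanishing conclusions in the equality case: if the trace is zero and $\mathbf B\succ\mathbf 0$, then $\mathbf A=\mathbf 0$.
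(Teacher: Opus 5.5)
Your proof is correct and complete. Each link in the chain $\tr(\mathbf K^{-1}\mathbf M)\ge\tr(\mathbf N^{-1}\mathbf M)\ge\tr\bigl(\mathbf N^{-1}(a\Id+c\mathbf Z)\bigr)=\sum_i f(z_i)\ge n/\lambda$ is valid, and your partial-fraction form of $f$ checks out. Your equality analysis also proceeds in the right order. To extract $\mathbf K^{-1}=\mathbf N^{-1}$ from equality in Step 1 you need $\mathbf M\succ\mathbf 0$, and that is only available once $\mathbf Z=\mathbf 0$ and $\mathbf M=a\Id$ are established.

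The paper gives no proof of this lemma; it quotes it from \cite[Lemma~2.2]{HeLiTang2026}, so there is no in-paper argument to compare with. Your route is operator monotonicity of inversion followed by Jensen for the strictly convex $z\mapsto(a+cz)/(\lambda a-dz)$ on the eigenvalues of the trace-free $\mathbf Z$. This is the ``trace-free convexity'' mechanism the paper alludes to, and it makes the lemma self-contained here.
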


\section{Folding constructions on space forms}\label{sec:Folding constructions on space forms}
In this section, we record the topological inputs that produce an $n$-dimensional trial space orthogonal to $u_0$ and $u_1$ on $\M_\kappa^n$. The nonpositive and positive curvature constructions are different at the topological level, but they lead to the same analytic object: a fold density bounded by $2$.

Let $H$ be a geodesic half-space in $\M_\kappa^n$ and $R_H$ be reflection across its totally geodesic boundary. Define the fold map onto $H$ by
\begin{equation}\label{eq:general-fold}
    F_H(x)=
    \begin{cases}
        \ x,&x\in H,\\
        \ R_H(x),&x\notin H.
    \end{cases}
\end{equation}
The map $F_H$ is continuous, locally Lipschitz, and an isometry on each side of $\partial H$. It maps $\M_\kappa^n$ onto $\overline H$. Since $\partial H$ is a null set, we shall omit $\partial H$ in the sequel whenever no ambiguity arises.

Define
\begin{equation}\label{eq:folded-measure}
    \alpha_H := \1_{T_{-H}(\Omega\cap H)} + \1_{T_{-H}(R_H(\Omega)\cap H)}
\end{equation}
to be the fold density, where $$R_H(\Omega)\cap H = R_H\big(\Omega\cap(\M_\kappa^n\setminus\overline{H})\big).$$ In particular, $\alpha_H$ has compact support with $0\leq\alpha_H\leq2$.
Here $T_{-H}$ is a centering isometry, that is, a translation on $\mathbb{R}^n$, a M\"{o}bius transformation on $\mathbb{H}^n$, and a rotation on $\mathbb{S}^n$. 
Since $R_H, T_{-H}$ are isometries and $F_H$ is an isometry on each side of $\partial H$, we have 
\begin{equation}\label{eq:fold-change-variables}
    \begin{aligned}
        \int_\Omega \Phi(T_{-H}F_Hx)\dd \nu &= \int_{\Omega\cap H}\Phi(T_{-H}x)\dd\nu+\int_{\Omega\cap(\M_\kappa^n\setminus\overline{H})}\Phi(T_{-H}R_Hx)\dd\nu \\
                                            &= \bigg(\int_{\Omega\cap H}+\int_{R_H(\Omega)\cap H}\bigg)\Phi(T_{-H}x)\dd\nu \\
                                            &= \int_{\M_\kappa^n}\Phi(x)\alpha_H(x)\dd \nu
    \end{aligned}
\end{equation}
for every integrable scalar- or matrix-valued function $\Phi$. In particular, 
\begin{equation}\label{eq:volume-equality-with-folded-measure}
    \int_{\M_\kappa^n}\alpha_H\dd\nu = \vert \Omega \vert.
\end{equation}
Also by isometry, the same change of variables applies to the energy tensor of a Sobolev function composed with $T_{-H}$ and $F_{H}$. Indeed, for any $\Phi_i \in H_{\rm{loc}}^1(\M_\kappa^n)$, we have
\begin{equation*}
    \nabla(\Phi_i \circ T_{-H} \circ F_H)(x) = dF_H(x)^* dT_{-H}(F_Hx)^* \nabla\Phi_i(T_{-H}F_Hx).
\end{equation*}
Since $T_{-H}$ is an isometry and $F_H$ is an isometry on each side of $\partial H$, 
\begin{equation*}
    \big\langle \nabla(\Phi_i \circ T_{-H} \circ F_H)(x),\nabla(\Phi_j \circ T_{-H} \circ F_H)(x) \big\rangle = \big\langle \nabla\Phi_i(T_{-H}F_Hx),\nabla\Phi_j(T_{-H}F_Hx) \big\rangle.
\end{equation*}
Using \eqref{eq:fold-change-variables}, we obtain
\begin{equation}\label{eq:fold-change-variables-nabla^2}
    \begin{aligned}
        \int_\Omega \big\langle \nabla(\Phi_i \circ T_{-H} \circ F_H)(x),\nabla(\Phi_j \circ T_{-H} \circ F_H)(x) \big\rangle \dd\nu = \int_{\M_\kappa^n} \big\langle \nabla\Phi_i(x),\nabla\Phi_j(x) \big\rangle\alpha_H(x)\dd\nu.
    \end{aligned}
\end{equation}

Let $u_0(={\rm{const.}})$ and $u_1$ be orthonormal eigenfunctions corresponding to $\mu_0(\Omega)$ and $\mu_1(\Omega)$ respectively. 
The purpose of the folding construction is to choose $H$ and a center so that the component functions $P_i(x)$ of
\begin{equation}\label{eq:folded-test-function-packet-P}
    \mathcal{P}(x) = \mathcal{G}\big((T_{-H} \circ F_H)(x)\big)
\end{equation}
satisfy
\begin{equation}\label{eq:double-orthogonality}
    \int_\Omega P_i(x) u_0(x)\dd \nu = \int_\Omega P_i(x) u_1(x)\dd \nu = 0,
    \qquad i=1,\dots,n.
\end{equation}
Without any change of notation, $\mathcal{G}(r,\theta)=g(r)\theta$ is the field defined by \eqref{eq:radial-field}.

\subsection{Euclidean folding construction}
First, by translating $\Omega$, we may suppose the Weinberger center of $\Omega$ lies at the origin, so that
\begin{equation}\label{eq:Weinberger-center-condition}
    \int_\Omega \mathcal{G}(x)\dd\nu=0.
\end{equation}
The existence of such a center of mass above was proved by Weinberger \cite{Weinberger1956}. Also see \cite[Corollary~2]{Laugesen2021WeinbergerCenter}.

For $p\in\Sph^{n-1}$ and $t\ge0$, define
\begin{equation*}
    H:=H_{p,t}=\{x\in\R^n:x\cdot p<t\},\qquad R_H(x):=R_{p,t}(x)=x-2(x\cdot p-t)p,
\end{equation*}
be a geodesic half-space and the corresponding reflection, respectively. Let $F_H$ be the fold map defined in \eqref{eq:general-fold} associated with $H_{p,t}$. More precisely,
\begin{equation*}
    F_H(x):=F_{p,t}(x)=
    \begin{cases}
        \ x,&x\in H_{p,t},\\
        \ R_{p,t}(x),&x\notin H_{p,t}.
    \end{cases}
\end{equation*}
For any $(p,t)\in\mathbb{S}^{n-1}\times[0,\infty)$, define the translation by 
\[
    T_{-H}(x):=T_{-c_{p,t}}(x)=x-c_{p,t},
\]
where $c_{p,t}$ is the center of the folded measure such that
\begin{equation*}
    \int_\Omega \mathcal{G}\big(F_{p,t}x-c_{p,t}\big)\dd\nu = 0.
\end{equation*}
The existence and uniqueness of \(c_{p,t}\), together with its continuous dependence on \((p,t)\), follow from \cite[Corollary~3]{Laugesen2021WeinbergerCenter}. Furthermore, $c_{p,t}\in H_{p,t}$ \cite[Lemma~5.1]{FreitasLaugesen2022}.

Define a vector field
\[
    W(p,t):=\int_{\Omega}\mathcal G\bigl(F_{p,t}x-c_{p,t}\bigr)u_1(x)\dd\nu,\qquad p\in\Sph^{n-1}, t\ge0.
\]
Obviously, $W$ is continuous since $F_{p,t}$ and $c_{p,t}$ depend continuously on $(p,t)$. At \(t=0\), the reflection commutativity $\mathcal{G} \circ R_{p,0}=R_{p,0} \circ \mathcal{G}$ gives
\[
    c_{-p,0}=R_{p,0}(c_{p,0}),\qquad W(-p,0)=R_{p,0}W(p,0).
\]
For sufficiently large \(\tau>0\), one has \(\Omega\subset H_{p,\tau}\) for every \(p\in\mathbb S^{n-1}\) since $\overline{\Omega}$ is compact. 
Hence, $c_{p,\tau}=0$ by \eqref{eq:Weinberger-center-condition} and \(W(p,\tau)\equiv\int_\Omega\mathcal{G}u_1\dd\nu\) is independent of \(p\). 
If \(W\) had no zero in \(\mathbb S^{n-1}\times[0,\tau]\), then
\[
    \Phi(p,t):=\frac{W(p,t)}{|W(p,t)|}
\]
would define a homotopy of maps \(\mathbb S^{n-1}\to\mathbb S^{n-1}\). Petrides' symmetry principle \cite[Claim~3]{Petrides2014} gives
\[
    \deg\Phi(\,\cdot\,,0)\neq0,
\]
whereas \(\Phi(\,\cdot\,,\tau)\) is constant and therefore has degree zero. This contradicts the homotopy invariance of degree; see \cite[Chapter~III]{OutereloRuiz2009}. Thus, for some \((p,t)\in\mathbb S^{n-1}\times[0,\tau]\), we have
\[
    \int_{\Omega} \mathcal G\bigl(F_{p,t}x-c_{p,t}\bigr)u_1(x)\dd\nu=0.
\]

Hence, there exists a half-space $H_{p,t}$ for which \eqref{eq:double-orthogonality} holds. The corresponding fold density $\alpha_{H_{p,t}}$, defined by \eqref{eq:folded-measure}, satisfies \eqref{eq:fold-change-variables}, \eqref{eq:volume-equality-with-folded-measure}, and \eqref{eq:fold-change-variables-nabla^2} on $\mathbb{R}^n$. 
This degree-theoretic folding argument follows the construction in \cite[Section~5]{FreitasLaugesen2022}. 
An alternative construction can be found in \cite{BucurHenrot2019}.

\subsection{Hyperbolic folding construction}
Use the Poincar\'e ball model
\[
    \Hyp^n=\mathbb B^n,\qquad g_{\Hyp^n}=\dd r^2+\sinh(r)^2g_{\mathbb{S}^{n-1}}=\frac{4|\dd x|^2}{(1-|x|^2)^2},
\]
where $x=\exp_o(r\theta)$ and $r=d_{\mathbb{B}^n}(o,x)=\log\big(\frac{1+\vert x\vert}{1-\vert x\vert}\big), \theta\in\mathbb{S}^{n-1}$. For $x\in\mathbb{B}^n, y\in\overline{\mathbb{B}^n}$, define the M\"obius transformation by
\begin{equation*}
    \begin{aligned}
        T_x:\overline{\mathbb{B}^n}&\to\overline{\mathbb{B}^n}\\
         y&\mapsto T_x(y),
    \end{aligned}
\end{equation*}
where 
\begin{equation*}
T_x(y)=\frac{(1+2x\cdot y+|y|^2)x+(1-|x|^2)y}{1+2x\cdot y+|x|^2|y|^2}.	
\end{equation*}
Then $T_x$ is a hyperbolic isometry with $T_x(0)=x, T_0(y)=y$, and $T_x^{-1}=T_{-x}$. It maps $\mathbb{B}^n$ to itself and $\partial\mathbb{B}^n$ to itself. For the general theory of M\"obius transformations of the Poincar\'e ball, see \cite{Ahlfors1981}.

By replacing $\Omega$ by $T_x(\Omega)$ for some $x$, we may assume that \eqref{eq:Weinberger-center-condition} still holds in $\mathbb{B}^n$. The proof of existence of such a "hyperbolic Weinberger center-of-mass" can be found in \cite{Chavel1980}, \cite[Theorem~6.1]{BenguriaLinde2007} and \cite[Corollary~2]{Laugesen2021HerschSzegoCenter}. 

For $p\in\Sph^{n-1}$ and $t\in[0,1)$, put
\begin{equation*}
    H_{p,0}=\{y\in\mathbb B^n:y\cdot p<0\},\qquad H_{p,t}=T_{tp}(H_{p,0}),
\end{equation*}
and define the corresponding reflection by
\begin{equation*}
    R_{p,t}=T_{tp}\circ R_{p,0}\circ T_{-tp},\qquad R_{p,0}(y)=y-2(y\cdot p)p.
\end{equation*}
Therefore, $H:=H_{p,t}$ is a geodesic half-space in $\Hyp^n$, and $R_H:=R_{p,t}$ is the reflection across $\partial H_{p,t}$. Let $F_H:=F_{p,t}$ be the associated fold map, that is
\begin{equation*}
    F_{p,t}(x)=
    \begin{cases}
        \ x,&x\in H_{p,t},\\
        \ R_{p,t}(x),&x\notin H_{p,t}.
    \end{cases}
\end{equation*}

The following procedure is analogous to that in the Euclidean space above. Let $c_{p,t}\in\mathbb B^n$ be the center of the folded measure, and define the centering isometry by $T_{-H}:=T_{-c_{p,t}}$, such that
\begin{equation*}
    \int_{\Omega}\mathcal{G}\big(T_{-c_{p,t}}\circ F_{p,t}(x)\big)\dd\nu=0.
\end{equation*}
The existence and uniqueness of \(c_{p,t}\), together with its continuous dependence on \((p,t)\), follow from \cite[Corollary~3]{Laugesen2021HerschSzegoCenter}.

Define a vector field
\[
    W(p,t):=\int_{\Omega}\mathcal{G}\big(T_{-c_{p,t}}\circ F_{p,t}(x)\big)u_1(x)\dd\nu,\qquad p\in\Sph^{n-1}, t\in[0,1).
\]
Obviously, $W$ is continuous since $F_{p,t}$ and $c_{p,t}$ depend continuously on $p,t$. At \(t=0\), the reflection commutativity $\mathcal{G} \circ R_{p,0}=R_{p,0} \circ \mathcal{G}$ and identity
\begin{equation*}
    T_{R_{p,0}x} \circ R_{p,0} = R_{p,0} \circ T_{x},\qquad \forall x\in\mathbb{B}^n, p\in\mathbb{S}^{n-1},
\end{equation*}
give
\[
    c_{-p,0}=R_{p,0}(c_{p,0}),\qquad W(-p,0)=R_{p,0}W(p,0).
\]

For \(\tau\in(0,1)\) sufficiently close to \(1\), one has \(\Omega\subset H_{p,\tau}\) for every \(p\in\mathbb S^{n-1}\) since $\overline{\Omega}$ is compact. 
Hence, $c_{p,\tau}=0$ by \eqref{eq:Weinberger-center-condition} and \(W(p,\tau)\equiv\int_\Omega\mathcal{G}u_1\dd\nu\) is independent of \(p\). 
By an argument analogous to the preceding Euclidean proof, there is some \((p,t)\in\mathbb S^{n-1}\times[0,\tau]\), such that
\[
    \int_{\Omega}\mathcal{G}\big(T_{-c_{p,t}}\circ F_{p,t}(x)\big)u_1(x)\dd\nu=0.
\]

Hence, there exists a half-space $H_{p,t}$ such that \eqref{eq:fold-change-variables}, \eqref{eq:volume-equality-with-folded-measure}, \eqref{eq:fold-change-variables-nabla^2} and \eqref{eq:double-orthogonality} hold on $\mathbb{B}^n$. This argument follows from \cite[Section~7]{FreitasLaugesen2022}.

\subsection{Spherical folding construction}

Regard the unit sphere as
\[
    \Sph^n=\{x\in\R^{n+1}:|x|=1\}, \qquad g_{\Sph^n}=\dd r^2+\sin^2(r)g_{\mathbb S^{n-1}}.
\]
Fix \(o=e_{n+1}\). In geodesic polar coordinates centered at \(o\),
\[
    x=\cos(r)o+\sin(r)\theta, \qquad r=d_{\Sph^n}(o,x), \quad \theta\in\mathbb S^{n-1}\subset T_o\Sph^n.
\]

For \(a\in\Sph^n\), define
\[
    H:=H_a=\{x\in\Sph^n:x\cdot a<0\}, \qquad R_H(x):=R_a(x)=x-2(x\cdot a)a.
\]
Thus \(H_a\) is a geodesic hemisphere and \(R_a\) is the reflection across its totally geodesic boundary. The associated fold map $F_H:=F_a$ is defined by
\[
    F_a(x)=
    \begin{cases}
        \ x,      &x\in H_a,\\
        \ R_a(x), &x\notin H_a.
    \end{cases}
\]

For \(z\in\Sph^n\), write
\[
    y=\cos(t)z+\sin(t)\omega, \qquad t=d_{\Sph^n}(z,y),\quad \omega\in\mathbb S^{n-1}\subset T_z\Sph^n,
\]
and define the radial field centered at \(z\) analogously to \eqref{eq:radial-field} by
\[
    \mathcal G_z(y):=g(t)\omega.
\]
Here \(g\) satisfies \eqref{eq:radial-ode} and is extended to \([0,\pi]\) by \eqref{eq:g-extension}. Equivalently, setting
\[
    \pi_z(y):=y-(y\cdot z)z, \qquad \gamma(s):=\frac{g(\arccos s)}{\sqrt{1-s^2}},
\]
we have
\[
    \mathcal G_z(y)=\gamma(z\cdot y)\pi_z(y).
\]
Moreover, \(\gamma\) extends to a continuous even function on \([-1,1]\). Let
\[
    \Gamma(s):=\int_0^s\gamma(q)\dd q
\]
and, for \(\rho\in L^1(\Sph^n)\), define
\[
    E_\rho(a,z):=\int_{\Sph^n}\rho(x)\Gamma\bigl(z\cdot F_a(x)\bigr)\dd\nu.
\]
Its spherical gradient with respect to \(z\) is
\[
    \nabla_zE_\rho(a,z)=\int_{\Sph^n}\rho(x)\gamma(z\cdot F_ax)\pi_z(F_ax)\dd\nu=\int_{\Sph^n}\rho(x)\mathcal G_z\bigl(F_a(x)\bigr)\dd\nu.
\]

Apply Theorem~5 in \cite{BucurMartinetNahon2025} with $\rho=\1_\Omega, \sigma=u_1\1_\Omega$.
There exist \(a,z\in\Sph^n\) such that \(z\) is a common critical point
of \(E_\rho(a,\cdot)\) and \(E_\sigma(a,\cdot)\). Consequently,
\[
    \int_\Omega\mathcal G_z\bigl(F_a(x)\bigr)\dd\nu(x)=0, \qquad \int_\Omega\mathcal G_z\bigl(F_a(x)\bigr)u_1(x)\dd\nu(x)=0.
\]
This simultaneous choice of \(a\) and \(z\) replaces the uniqueness and continuous dependence of the center of the folded measure used in the Euclidean and hyperbolic cases. 
Its proof is based on a homotopy-invariant count of zeros modulo \(4\); see \cite[Sections~2--3]{BucurMartinetNahon2025}.

Choose a rotation \(T_{-z}\in SO(n+1)\) satisfying \(T_{-z}z=o\), and set
\[
    T_{-H}:=T_{-z}.
\]
The rotational equivariance of the radial field gives
\[
    \mathcal G\bigl(T_{-z}y\bigr) = T_{-z}\mathcal G_z(y).
\]
Therefore, we have
\begin{equation*}
    \int_\Omega\mathcal{G}\bigl(T_{-z} \circ F_a(x)\bigr)\dd\nu(x)=0, \qquad \int_\Omega\mathcal{G}\bigl(T_{-z} \circ F_a(x)\bigr)u_1(x)\dd\nu(x)=0.
\end{equation*}
These imply \eqref{eq:fold-change-variables}, \eqref{eq:volume-equality-with-folded-measure}, \eqref{eq:fold-change-variables-nabla^2} and \eqref{eq:double-orthogonality} hold on $\Sph^n$.

In particular, owing to the symmetry of $\mathbb{S}^{n}$, we modify the definition \eqref{eq:folded-measure} of $\alpha_H$ by
\begin{equation}\label{eq:temp-folded-measure-modification-S^n}
    \begin{aligned}
        \alpha_H :&= \1_{T_{-H}(\Omega\cap H)\cap\mathbb{S}^{n}_{+}} + \1_{T_{-H}(R_H(\Omega)\cap H)\cap\mathbb{S}^{n}_{+}} + \1_{-(T_{-H}(\Omega\cap H)\cap\mathbb{S}^{n}_{-})} + \1_{-(T_{-H}(R_H(\Omega)\cap H)\cap\mathbb{S}^{n}_{-})} \\
                  &= \1_{T_{-H}(\Omega\cap H)\cap\mathbb{S}^{n}_{+}} + \1_{T_{-H}(R_H(\Omega)\cap H)\cap\mathbb{S}^{n}_{+}} + \1_{-T_{-H}(\Omega\cap H)\cap\mathbb{S}^{n}_{+}} + \1_{-T_{-H}(R_H(\Omega)\cap H)\cap\mathbb{S}^{n}_{+}}. \\
    \end{aligned}
\end{equation}
where
\begin{equation*}
    \mathbb{S}^{n}_{+}=\{x\in\mathbb{S}^n: x\cdot e_{n+1}>0\} \qquad \text{and} \qquad \mathbb{S}^{n}_{-}=\{x\in\mathbb{S}^n: x\cdot e_{n+1}<0\}
\end{equation*}
are the upper and lower hemispheres, respectively. Since the equator $\mathbb{S}^{n}\setminus(\mathbb{S}^{n}_{+}\cup\mathbb{S}^{n}_{-})$ has measure zero, we may neglect it. 

Intuitively, since $T_{-H}(\Omega\cap H)$ and $T_{-H}(R_H(\Omega)\cap H)$ lie within the hemisphere $T_{-H}H$, we retain the measure on $T_{-H}H\cap\mathbb{S}^{n}_{+}$ unchanged and symmetrically push forward the measure on $T_{-H}H\cap\mathbb{S}^{n}_{-}$ to the antipodal points. As a result, the modified fold density $\alpha_H$ is supported on the closure of the upper hemisphere $\mathbb{S}^{n}_{+}$,
\begin{equation*}
    \operatorname{supp}(\alpha_H)\subset\overline{\mathbb{S}^n_+}.
\end{equation*}

Since $T_{-H}(\Omega\cap H)\cap\mathbb{S}^{n}_{+} \subset T_{-H}H$ while $-T_{-H}(\Omega\cap H)\cap\mathbb{S}^{n}_{+} \subset -T_{-H}H$, they are disjoint. Similarly, $T_{-H}(R_H(\Omega)\cap H)\cap\mathbb{S}^{n}_{+}$ and $-T_{-H}(R_H(\Omega)\cap H)\cap\mathbb{S}^{n}_{+}$ are also disjoint. Therefore, \eqref{eq:temp-folded-measure-modification-S^n} can be reduced to 
\begin{equation}\label{eq:folded-measure-modification-S^n}
    \alpha_H = \1_{\Omega_1\cap\mathbb{S}^{n}_{+}} + \1_{\Omega_2\cap\mathbb{S}^{n}_{+}},
\end{equation}
where
\begin{equation*}
    \begin{aligned}
        \Omega_1 &:= \Big(T_{-H}(\Omega\cap H)\Big)      \sqcup \Big(-T_{-H}(\Omega\cap H)\Big), \\
        \Omega_2 &:= \Big(T_{-H}(R_H(\Omega)\cap H)\Big) \sqcup \Big(-T_{-H}(R_H(\Omega)\cap H)\Big).
    \end{aligned} 
\end{equation*}

Therefore, if the function $\Phi$ is symmetric on the sphere, that is $\Phi(x)=\Phi(-x)$, \eqref{eq:fold-change-variables} can be reduced to 
\begin{equation}\label{eq:fold-change-variables-modification-S^n}
    \int_\Omega \Phi(T_{-H}F_Hx)\dd\nu = \int_{\mathbb{S}^{n}_{+}}\Phi(x)\alpha_{H}(x)\dd\nu.
\end{equation}
Analogously, if $\Phi_i$ and $\Phi_j$ satisfy $\langle (\nabla\Phi_i)(x),(\nabla\Phi_j)(x) \rangle=\langle (\nabla\Phi_i)(-x),(\nabla\Phi_j)(-x) \rangle$, \eqref{eq:fold-change-variables-nabla^2} can be reduced to 
\begin{equation}\label{eq:fold-change-variables-nabla^2-modification-S^n}
    \begin{aligned}
        \int_\Omega \big\langle \nabla(\Phi_i \circ T_{-H} \circ F_H)(x),\nabla(\Phi_j \circ T_{-H} \circ F_H)(x) \big\rangle \dd\nu = \int_{\mathbb{S}^{n}_{+}} \big\langle \nabla\Phi_i(x),\nabla\Phi_j(x) \big\rangle\alpha_H(x)\dd\nu.
    \end{aligned}
\end{equation}
Moreover, 
\begin{equation}\label{eq:volume-equality-with-folded-measure-modification-S^n}
    \int_{\mathbb{S}^{n}_{+}} \alpha_H \dd\nu = \vert \Omega \vert.
\end{equation}
Here $\alpha_H$ in \eqref{eq:fold-change-variables-modification-S^n}, \eqref{eq:fold-change-variables-nabla^2-modification-S^n} and \eqref{eq:volume-equality-with-folded-measure-modification-S^n} is the modified fold density defined by \eqref{eq:folded-measure-modification-S^n}.

\section{Proof of the inequality in \cref{thm:main}}\label{sec:Proof of main theorem}
With the folded trial fields constructed in the preceding section, we now prove the sharp inequality \eqref{eq:main}. The argument combines the shifted Ritz reduction with a global matrix-valued mass transplantation comparing the fold density \(\alpha_H\) with the saturated density \(2\cdot\1_{B_R^\kappa}\), followed by the trace-free matrix inequality from \cref{sec:Variational and matrix tools}.

\subsection{Fold density and the trace-free defect matrix}
Let $\alpha_H$ be the fold density defined by \eqref{eq:folded-measure} when $\kappa\le0$, and be the modified fold density defined by \eqref{eq:folded-measure-modification-S^n} when $\kappa=1$. Hence $\alpha_H$ is compactly supported, and $0\le\alpha_H\le2$. 

Define the matrix
\begin{equation}\label{eq:Z-definition}
    \mathbf{Z}:=\int_{B^\kappa_{L_\kappa}}\Big(\alpha_H(x)-2\cdot\1_{B^\kappa_R}(x)\Big)\theta\theta^\transpose\dd\nu.
\end{equation}
Using \eqref{eq:volume-equality-with-folded-measure} when $\kappa\leq0$ and \eqref{eq:volume-equality-with-folded-measure-modification-S^n} when $\kappa=1$, together with $\vert\Omega\vert=2\vert B_R^\kappa\vert$, we obtain
\begin{equation}\label{eq:global-mass-balance}
    \int_{B^\kappa_{L_\kappa}}\Big(\alpha_H(x)-2\cdot\1_{B^\kappa_R}(x)\Big)\dd\nu=0.
\end{equation}
Since $\tr(\theta\theta^\transpose)=1$, 
\begin{equation}\label{eq:Z-trace-free}
    \tr\mathbf Z=0.
\end{equation}

\subsection{Folded trial space and the shifted Ritz bound}
Let \(P_i\) denote the \(i\)-th component of the field \(\mathcal P\) which is defined by \eqref{eq:folded-test-function-packet-P}. Since $\mathcal{G}, T_{-H}$ and $F_H$ are locally Lipschitz, $P_1,\cdots,P_n \in H^1(\Omega)$. Moreover, the orthogonality condition \eqref{eq:double-orthogonality} holds on all three space forms, in view of the discussion in \cref{sec:Folding constructions on space forms}.

Define matrices $\mathbf{M}$ and $\mathbf{K}$ by
\begin{equation*}
    \mathbf{M}_{ij}:=\int_\Omega P_iP_j\dd\nu, \qquad \mathbf{K}_{ij}:=\int_\Omega \langle\nabla P_i,\nabla P_j\rangle\dd\nu.
\end{equation*}
If $\kappa=1$, let $\Phi=G_iG_j$. Then
\begin{equation*}
    \Phi(-x) = g(\pi-r)(-\theta_i) \cdot g(\pi-r)(-\theta_j) = g(r)\theta_i \cdot g(r)\theta_j = \Phi(x).
\end{equation*}
Similarly, let $\Phi_i=G_i, \Phi_j=G_j$, 
\begin{equation*}
    \langle (\nabla\Phi_i)(-x),(\nabla\Phi_j)(-x) \rangle = \langle (\nabla\Phi_i)(x),(\nabla\Phi_j)(x) \rangle.
\end{equation*}
Therefore, by using \eqref{eq:GG-and-nablaGnablaG}, \eqref{eq:fold-change-variables}, \eqref{eq:fold-change-variables-nabla^2}, \eqref{eq:fold-change-variables-modification-S^n} and \eqref{eq:fold-change-variables-nabla^2-modification-S^n}, 
\begin{equation}\label{eq:expression-for-M-and-K}
    \begin{aligned}
        \mathbf{M} &= \int_{B^\kappa_{L_\kappa}}       g(r)^2 \theta\theta^\transpose                                                                             \cdot \alpha_H(x) \dd\nu, \\
        \mathbf{K} &= \int_{B^\kappa_{L_\kappa}} \bigg[g'(r)^2\theta\theta^\transpose + \frac{g(r)^2}{\sn_\kappa(r)^2}\Big(\Id-\theta\theta^\transpose\Big)\bigg] \cdot \alpha_H(x) \dd\nu.
    \end{aligned}
\end{equation}

The next proposition converts the folded trial space into the desired shifted trace estimate.
\begin{proposition}\label{prop:Sigma1/mu-geq-trK^-1M}
    The matrices $\mathbf M$ and $\mathbf K$ defined above satisfy
    \begin{equation*}
        \sum_{j=2}^{n+1}\frac1{\mu_j(\Omega)}\geq\tr(\mathbf K^{-1}\mathbf M).
    \end{equation*}
\end{proposition}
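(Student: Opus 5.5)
The plan is to apply \cref{lem:shifted-ritz} to the folded components $P_1,\dots,P_n$. Three hypotheses have to be checked. First, $P_i\in H^1(\Omega)$, which was already noted. Second, the double orthogonality \eqref{eq:double-orthogonality}, which is supplied on all three space forms by \cref{sec:Folding constructions on space forms}. Third, linear independence of the $P_i$ in $L^2(\Omega)$ together with positive definiteness of $\mathbf K$. Only the third item needs an argument. Both linear independence and $\mathbf K\succ\mathbf 0$ will come from the integral representations \eqref{eq:expression-for-M-and-K}.

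For $\mathbf K$, fix $\xi\in\R^n\setminus\{0\}$. By \eqref{eq:expression-for-M-and-K},
\[
\xi^\transpose\mathbf K\xi=\int_{B^\kappa_{L_\kappa}}\Big[g'(r)^2(\theta\cdot\xi)^2+\frac{g(r)^2}{\sn_\kappa(r)^2}\big(|\xi|^2-(\theta\cdot\xi)^2\big)\Big]\alpha_H\dd\nu .
\]
The integrand is nonnegative. By \eqref{eq:volume-equality-with-folded-measure} (or its spherical modification), $\alpha_H$ has total mass $|\Omega|>0$, so its support has positive measure. On the region $0<r<L_\kappa$ we have $g>0$, since $g>0$ on $(0,R]$ and the extension \eqref{eq:g-extension} is constant equal to $g(R)>0$ up to $L_\kappa$. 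Hence the tangential term $g^2\sn_\kappa^{-2}(|\xi|^2-(\theta\cdot\xi)^2)$ is strictly positive except on the null set where $\theta\parallel\xi$. Integrating against $\alpha_H$, which is nonzero on a set of positive measure, gives $\xi^\transpose\mathbf K\xi>0$.

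In the spherical case the support of $\alpha_H$ lies in $\overline{\Sph^n_+}$, so $r\le\pi/2=L_\kappa$ there and the same reasoning applies. The equator and the pole are null sets, so they cause no difficulty.

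Linear independence follows the same way from $\xi^\transpose\mathbf M\xi=\int g^2(\theta\cdot\xi)^2\alpha_H\dd\nu>0$. Indeed, $(\theta\cdot\xi)^2>0$ off a null set, and $g^2>0$ away from $r=0$. Hence $\mathbf M\succ\mathbf 0$, which is exactly linear independence of the $P_i$ in $L^2(\Omega)$.

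With all hypotheses verified, \cref{lem:shifted-ritz} gives the stated inequality directly. The only real care needed, and the step I would expect to require attention, is the spherical bookkeeping. Here \eqref{eq:expression-for-M-and-K} relies on the antipodal symmetry of $G_iG_j$ and $\langle\nabla G_i,\nabla G_j\rangle$, and hence on the identity $g(\pi-r)=g(r)$. This ensures that the quadratic forms computed on $\Omega$ genuinely equal the integrals against the modified density $\alpha_H$. That identity was already checked before \eqref{eq:expression-for-M-and-K}, so the positivity argument above applies verbatim.
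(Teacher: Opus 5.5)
Your proposal is correct and follows the paper's proof essentially step for step. You reduce to \cref{lem:shifted-ritz}, then get $\mathbf M\succ\mathbf 0$ (hence linear independence) and $\mathbf K\succ\mathbf 0$ from the integral representations \eqref{eq:expression-for-M-and-K}, using $g>0$ on $(0,L_\kappa)$, the null sets $\{\theta\cdot\xi=0\}$ and $\{\theta=\pm\xi/|\xi|\}$, and the positive total mass of $\alpha_H$.
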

\begin{proof}
    By \cref{lem:shifted-ritz}, it suffices to prove that \(P_1,\ldots,P_n\) are linearly independent in \(L^2(\Omega)\), and $\mathbf K$ is positive definite.

    To prove that \(P_1,\ldots,P_n\) are linearly independent, we show that $\mathbf M$ is positive definite since $\mathbf{M}$ is the Gram matrix.
    Fix \(0\neq\xi\in\R^n\) and set $P_\xi:=\sum_{i=1}^n\xi_iP_i$. By \eqref{eq:expression-for-M-and-K},
    \begin{equation}\label{eq:M-positive-definite}
        \xi^\transpose\mathbf M\xi = \int_{B^\kappa_{L_\kappa}} g(r)^2(\xi\cdot\theta)^2 \alpha_H(x)\dd\nu.
    \end{equation}
    The extension \eqref{eq:g-extension} satisfies $g(r)>0$ on $0<r<L_\kappa$. Moreover, since \(\xi\neq0\) and \(n\geq2\),
    \[
        \sigma\bigl(\{\theta\in\Sph^{n-1}:\xi\cdot\theta=0\}\bigr)=0.
    \]
    Thus \(g(r)^2(\xi\cdot\theta)^2>0\) for almost every \((r,\theta)\), or equivalently, for $\nu$-a.e. $x$. Since
    \[
        \int_{B^\kappa_{L_\kappa}}\alpha_H(x)\dd\nu=|\Omega|>0,
    \]
    equation \eqref{eq:M-positive-definite} gives
    \[
        \xi^\transpose\mathbf M\xi>0.
    \]
    Hence \(\mathbf M\) is positive definite, which implies that \(P_1,\ldots,P_n\) are linearly independent in \(L^2(\Omega)\).

    Similarly, \eqref{eq:expression-for-M-and-K} yields
    \begin{equation*}
        \begin{aligned}
            \xi^\transpose\mathbf K\xi = \int_{B^\kappa_{L_\kappa}} \bigg[g'(r)^2(\xi\cdot\theta)^2+\frac{g(r)^2}{\sn_\kappa(r)^2}\Big(|\xi|^2-(\xi\cdot\theta)^2\Big)\bigg]\alpha_H(x)\dd\nu.
        \end{aligned}
    \end{equation*}
    For \(0<r<L_\kappa\), $g^2/\sn_\kappa^2>0$. Furthermore,
    \[
        |\xi|^2-(\xi\cdot\theta)^2=0 \quad \Longleftrightarrow \quad \theta=\pm\frac{\xi}{|\xi|}.
    \]
    The set \(\{\pm\xi/|\xi|\}\subset\Sph^{n-1}\) has zero \((n-1)\)-dimensional measure. Consequently, the integrand above is strictly positive for almost every \((r,\theta)\). Since the fold density has total mass \(|\Omega|>0\), it follows that
    \[
        \xi^\transpose\mathbf K\xi>0.
    \]
    Therefore \(\mathbf K\) is positive definite.
\end{proof}

\subsection{Global transplantation for the mass matrix}
To apply \cref{lem:trace-free-matrix-Jensen-inequality}, we next construct the matrix comparison. Set
\begin{equation*}
    a=\frac{2A_R}{n}, \qquad c=g(R)^2, \qquad d=\frac{g(R)^2}{\sn_\kappa(R)^2}.
\end{equation*}
By symmetry, 
\begin{equation}\label{eq:integral-of-thetatheta^T}
    \int_{\Sph^{n-1}}\theta\theta^\transpose\dd\sigma(\theta) = \frac{\vert\Sph^{n-1}\vert}{n}\Id.
\end{equation}

\begin{proposition}\label{prop:M-geq-aI+cZ}
    The matrices $\mathbf M$ and $\mathbf Z$ defined above satisfy
    \begin{equation*}
        \mathbf M\succeq a\Id+c\mathbf Z.
    \end{equation*}
    Equality holds if and only if $\alpha_H=2\cdot\1_{B^\kappa_R}$ for $\nu$-a.e. $x\in B^\kappa_{L_\kappa}$.
\end{proposition}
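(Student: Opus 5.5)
We compute $\mathbf M-a\Id-c\mathbf Z$ directly from \eqref{eq:expression-for-M-and-K} and \eqref{eq:Z-definition}. The goal is to write the difference as a single integral of a pointwise nonnegative scalar times $\theta\theta^\transpose$.

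First we identify $a\Id$ as the model mass matrix. By \eqref{eq:integral-of-thetatheta^T} and polar coordinates,
\[
\int_{B^\kappa_{L_\kappa}} g(r)^2\theta\theta^\transpose\, 2\cdot\1_{B_R^\kappa}\dd\nu=\frac{2}{n}\int_{B_R^\kappa}g(r)^2\dd\nu\,\Id=\frac{2A_R}{n}\Id=a\Id .
\]
Subtracting this from $\mathbf M$ and using the definition of $\mathbf Z$ with $c=g(R)^2$ gives
\[
\mathbf M-a\Id-c\mathbf Z=\int_{B^\kappa_{L_\kappa}}\bigl(g(r)^2-g(R)^2\bigr)\bigl(\alpha_H(x)-2\cdot\1_{B_R^\kappa}(x)\bigr)\theta\theta^\transpose\dd\nu .
\]

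Next we check that the scalar weight $w(x)=(g(r)^2-g(R)^2)(\alpha_H-2\cdot\1_{B_R^\kappa})$ is pointwise nonnegative on $B^\kappa_{L_\kappa}$. This is the key step, and it is exactly where the fold bound $0\le\alpha_H\le2$ and \cref{lem:radial-monotonicity} enter.
\begin{itemize}
\item Inside $B_R^\kappa$ ($r<R$): $g^2$ is non-decreasing, so $g(r)^2-g(R)^2\le0$. Also $\alpha_H-2\le0$. Hence $w\ge0$.
\item Outside $B_R^\kappa$ ($R\le r<L_\kappa$): both factors are $\ge0$. In the Euclidean and hyperbolic cases the first factor vanishes identically by \eqref{eq:g-extension}. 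In the spherical case it also vanishes, since $L_1=\pi/2\le\pi-R$ and $g\equiv g(R)$ on $[R,\pi-R]$.
\end{itemize}
For $\kappa=1$ we use that the modified $\alpha_H$ is supported in $\overline{\Sph^n_+}=\overline{B^1_{\pi/2}}$. This is why the modification \eqref{eq:folded-measure-modification-S^n} was introduced: it keeps the integration inside the region where the monotonicity holds.

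Since $\theta\theta^\transpose\succeq0$ and $w\ge0$, for every $\xi$ we get
\[
\xi^\transpose(\mathbf M-a\Id-c\mathbf Z)\xi=\int w\,(\xi\cdot\theta)^2\dd\nu\ge0,
\]
which proves $\mathbf M\succeq a\Id+c\mathbf Z$. The same computation can be phrased in terms of the mass balance \eqref{eq:global-mass-balance}, but this is not needed, since $c\mathbf Z$ already absorbs the constant $g(R)^2$.

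For the equality case, "equality" means $\mathbf M=a\Id+c\mathbf Z$. Taking the trace gives $\int w\dd\nu=0$, so $w=0$ a.e. Inside $B_R^\kappa$ we have $g(r)^2<g(R)^2$ for $r<R$, because $g'>0$ on $(0,R)$. Hence $\alpha_H=2$ a.e. on $B_R^\kappa$. The mass balance \eqref{eq:global-mass-balance} then forces $\int_{B^\kappa_{L_\kappa}\setminus B_R^\kappa}\alpha_H\dd\nu=0$, and since $\alpha_H\ge0$, we get $\alpha_H=0$ a.e. there. Thus $\alpha_H=2\cdot\1_{B_R^\kappa}$ a.e. The converse is immediate, since $w\equiv0$ in that case.

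The only delicate point is the strictness of $g^2<g(R)^2$ on $[0,R)$. It follows from $g'>0$ on $(0,R)$, not merely from the non-decreasing statement in \cref{lem:radial-monotonicity}. We also need the spherical support property to avoid the region $r>\pi/2$, where $g^2$ decreases.
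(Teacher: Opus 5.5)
Your proposal is correct and follows the paper's proof essentially line by line. It uses the same integral identity for $\mathbf M-a\Id-c\mathbf Z$ with scalar weight $\bigl(g(r)^2-g(R)^2\bigr)\bigl(\alpha_H-2\cdot\1_{B^\kappa_R}\bigr)$, the same sign check via \cref{lem:radial-monotonicity} and $0\le\alpha_H\le2$, and the same trace-plus-mass-balance argument for the equality case. Your explicit appeal to $g'>0$ on $(0,R)$ for the strict inequality $g(r)^2<g(R)^2$ on $B^\kappa_R$ is a welcome precision, since at that point the paper cites only the non-decreasing statement of \cref{lem:radial-monotonicity}.
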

\begin{proof}
    Using \eqref{eq:integral-of-thetatheta^T}, 
    \begin{equation*}
        \begin{aligned}
            a\Id = \int_{B^\kappa_{L_\kappa}} 2\cdot\1_{B^\kappa_R}(x) g(r)^2 \theta\theta^\transpose \dd\nu.
        \end{aligned}
    \end{equation*}
    Combining \eqref{eq:Z-definition} and \eqref{eq:expression-for-M-and-K} yields
    \begin{equation}\label{eq:temp-M-geq-aI+cZ}
        \begin{aligned}
            \mathbf{M}-a\Id-c\mathbf{Z} = \int_{B^\kappa_{L_\kappa}} \Big(g(r)^2-g(R)^2\Big)\Big(\alpha_H(x)-2\cdot\1_{B^\kappa_R}(x)\Big) \theta\theta^\transpose \dd\nu.
        \end{aligned}
    \end{equation}
    By \cref{lem:radial-monotonicity}, $g^2$ is non-decreasing on $[0,L_\kappa)$. Consequently, $g(r)^2 < g(R)^2$ and $\alpha_H(x) \le 2 = 2\cdot \mathbf{1}_{B^\kappa_R}$ on $B^\kappa_R$. On $B^\kappa_{L_\kappa}\setminus B^\kappa_R$, we instead have $g(r)^2 = g(R)^2$ and $\alpha_H(x) \ge 0 = 2\cdot \mathbf{1}_{B^\kappa_R}$.
    Therefore,
    \begin{equation*}
        \Big(g(r)^2-g(R)^2\Big)\Big(\alpha_H(x)-2\cdot\1_{B^\kappa_R}(x)\Big) \ge 0, \qquad \text{for $\nu$-a.e. } x \in B^\kappa_{L_\kappa}.
    \end{equation*}
    Since $\theta\theta^\transpose$ is positive semidefinite, we obtain
    \begin{equation*}
        \mathbf{M}-a\Id-c\mathbf{Z} \succeq \mathbf{0}.
    \end{equation*}

    We now turn to the equality case. If $\alpha_H=2\cdot\1_{B^\kappa_R}$ for $\nu$-a.e. $x\in B^\kappa_{L_\kappa}$, equality follows immediately from \eqref{eq:temp-M-geq-aI+cZ}. 
    Conversely, if $\mathbf{M}=a\Id+c\mathbf{Z}$, taking the trace of \eqref{eq:temp-M-geq-aI+cZ} yields
    \begin{equation*}
        \int_{B^\kappa_{L_\kappa}} \Big(g(r)^2-g(R)^2\Big)\Big(\alpha_H(x)-2\cdot\1_{B^\kappa_R}(x)\Big) \dd\nu = 0.
    \end{equation*}
    This implies that
    \begin{equation*}
        \alpha_H(x) = 2, \qquad \text{for $\nu$-a.e. } x\in B^\kappa_{R},
    \end{equation*}
    since $g(r)^2<g(R)^2$ on $B^\kappa_R$. Together with \eqref{eq:global-mass-balance}, it follows that
    \begin{equation*}
        \alpha_H(x) = 2\cdot\1_{B^\kappa_R}(x), \qquad \text{for $\nu$-a.e. } x\in B^\kappa_{L_\kappa}.
    \end{equation*}
\end{proof}

\subsection{Global transplantation for the energy matrix}
We next estimate the energy matrix from above by an analogous global comparison.
\begin{proposition}\label{prop:K-leq-lambdaaI-dZ}
    The matrices $\mathbf K$ and $\mathbf Z$ defined above satisfy
    \begin{equation*}
        \mathbf K\preceq\lambda a\Id-d\mathbf Z.
    \end{equation*}
    Equality holds if and only if $\alpha_H=2\cdot\1_{B^\kappa_R}$ for $\nu$-a.e. $x\in B^\kappa_{L_\kappa}$.
\end{proposition}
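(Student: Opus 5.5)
The plan mirrors the proof of \cref{prop:M-geq-aI+cZ}. I will write $\mathbf K-\lambda a\Id+d\mathbf Z$ as a single integral against the signed density $\alpha_H-2\cdot\1_{B^\kappa_R}$ and check the sign of the integrand pointwise.

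First I identify $\lambda a\Id$ as the energy matrix of the saturated density $2\cdot\1_{B^\kappa_R}$. Write $h(r)=g(r)^2/\sn_\kappa(r)^2$ and let $N(x)=g'(r)^2\theta\theta^\transpose+h(r)(\Id-\theta\theta^\transpose)$ be the integrand in \eqref{eq:expression-for-M-and-K}. By \eqref{eq:integral-of-thetatheta^T},
\[
\int_{B^\kappa_{L_\kappa}}2\cdot\1_{B^\kappa_R}N\dd\nu=\frac2n\bigl(Q_R+(n-1)H_R\bigr)\Id .
\]
By the energy identity \eqref{eq:energy-identity}, this equals $\frac{2\lambda A_R}{n}\Id=\lambda a\Id$.

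Next I remove the identity part using mass balance. By \eqref{eq:global-mass-balance}, $\int(\alpha_H-2\cdot\1_{B^\kappa_R})\,d\Id\dd\nu=\mathbf 0$, so this term may be added freely. Combining with \eqref{eq:Z-definition} and \eqref{eq:expression-for-M-and-K} gives
\[
\mathbf K-\lambda a\Id+d\mathbf Z=\int_{B^\kappa_{L_\kappa}}\bigl(\alpha_H-2\cdot\1_{B^\kappa_R}\bigr)\Bigl[g'(r)^2\theta\theta^\transpose+\bigl(h(r)-d\bigr)(\Id-\theta\theta^\transpose)\Bigr]\dd\nu .
\]
This regrouping is the key step: the defect matrix $\mathbf Z$ only sees $\theta\theta^\transpose$, and the trace-free freedom is what converts it into the coefficient $h-d$ in front of the tangential projector.

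Now I check signs region by region, noting $d=h(R)$.

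On $B^\kappa_R$ we have $\alpha_H-2\le0$. Also $h>d$ for $0<r<R$, because $h$ is strictly decreasing by \cref{lem:radial-monotonicity}. So the bracket is a nonnegative combination of the projectors $\theta\theta^\transpose$ and $\Id-\theta\theta^\transpose$, hence positive semidefinite, and the integrand is negative semidefinite.

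On $B^\kappa_{L_\kappa}\setminus B^\kappa_R$ we have $\alpha_H\ge0$. In the spherical case I use that the modified density is supported in $\overline{\Sph^n_+}=\overline{B^1_{\pi/2}}$ and that $R<\pi/2\le\pi-R$. Then the extension \eqref{eq:g-extension} makes $g$ constant there, so $g'=0$. Moreover $h<d$ for $R<r<L_\kappa$. The bracket is therefore $(h-d)(\Id-\theta\theta^\transpose)\preceq\mathbf 0$, and the integrand is again negative semidefinite.

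Integrating, $\mathbf K\preceq\lambda a\Id-d\mathbf Z$.

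For the equality case, sufficiency is immediate from the displayed identity. For necessity, I take the trace. This yields
\[
\int(\alpha_H-2\cdot\1_{B^\kappa_R})\bigl[g'^2+(n-1)(h-d)\bigr]\dd\nu=0,
\]
where the integrand is pointwise $\le0$. Since $n\ge2$ and $h\ne d$ for $r\ne R$, the factor $g'^2+(n-1)(h-d)$ is nonzero a.e. Hence $\alpha_H=2$ a.e. on $B^\kappa_R$ and $\alpha_H=0$ a.e. outside it.

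The only delicate point is the spherical support and monotonicity bookkeeping: $g'$ must vanish and $h<d$ must hold on all of $B^1_{\pi/2}\setminus B^1_R$. This is guaranteed by the antipodal modification of $\alpha_H$ together with $R<\pi/2$.
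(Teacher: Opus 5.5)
Your proposal is correct and follows the paper's proof essentially step for step: you identify $\lambda a\Id$ as the energy matrix of $2\cdot\1_{B^\kappa_R}$ via \eqref{eq:energy-identity}, use \eqref{eq:global-mass-balance} to turn $d\mathbf Z$ into a term with coefficient $g(r)^2/\sn_\kappa(r)^2-d$ in front of $\Id-\theta\theta^\transpose$, check signs separately on $B^\kappa_R$ and on $B^\kappa_{L_\kappa}\setminus B^\kappa_R$, and take the trace for the equality case. Your explicit remark that $g'=0$ on all of $B^1_{\pi/2}\setminus B^1_R$ because $R<\pi/2\le\pi-R$ makes precise a point the paper uses without comment.
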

\begin{proof}
    Using \eqref{eq:energy-identity} and \eqref{eq:integral-of-thetatheta^T}, 
    \begin{equation*}
        \begin{aligned}
            \lambda a\Id &= \frac{2}{n}\Big(Q_R+(n-1)H_R\Big)\Id \\
                         &= \int_{B^\kappa_{L_\kappa}} 2\cdot\1_{B^\kappa_R}(x)\bigg[g'(r)^2\theta\theta^\transpose + \frac{g(r)^2}{\sn_\kappa(r)^2}\Big(\Id-\theta\theta^\transpose\Big)\bigg]\dd\nu.
        \end{aligned}
    \end{equation*}
    On the other hand, using \eqref{eq:global-mass-balance}, we may rewrite \eqref{eq:Z-definition} as
    \begin{equation*}
        -d\mathbf{Z}=\int_{B^\kappa_{L_\kappa}}\frac{g(R)^2}{\sn_\kappa(R)^2}\Big(\alpha_H(x)-2\cdot\1_{B^\kappa_R}(x)\Big)\Big(\Id-\theta\theta^\transpose\Big)\dd\nu.
    \end{equation*}
    Therefore, combining these identities with \eqref{eq:expression-for-M-and-K} gives
    \begin{equation}\label{eq:temp-K-leq-lambdaaI-dZ}
        \begin{aligned}
            \lambda a\Id-d\mathbf{Z}-\mathbf{K} &= \int_{B^\kappa_{L_\kappa}} \Bigg[ \Big(2\cdot\1_{B^\kappa_R}(x)-\alpha_H(x)\Big)g'(r)^2\theta\theta^\transpose \\
                                                &\qquad\qquad\qquad + \Big(2\cdot\1_{B^\kappa_R}(x)-\alpha_H(x)\Big)\bigg(\frac{g(r)^2}{\sn_\kappa(r)^2}-\frac{g(R)^2}{\sn_\kappa(R)^2}\bigg)\Big(\Id-\theta\theta^\transpose\Big) \Bigg]\dd\nu.
        \end{aligned}
    \end{equation}
    By \cref{lem:radial-monotonicity}, $g^2/\sn_\kappa^2$ is strictly decreasing on $(0,L_\kappa)$. Consequently, 
    \begin{equation*}
        \frac{g(r)^2}{\sn_\kappa(r)^2}>\frac{g(R)^2}{\sn_\kappa(R)^2}, \qquad 2\cdot\1_{B^\kappa_R}(x)=2\ge\alpha_H(x), \qquad g'(r)^2>0, \qquad\quad \text{ on } B^\kappa_R,
    \end{equation*}
    and
    \begin{equation*}
        \frac{g(r)^2}{\sn_\kappa(r)^2}<\frac{g(R)^2}{\sn_\kappa(R)^2}, \qquad 2\cdot\1_{B^\kappa_R}(x)=0\le\alpha_H(x), \qquad g'(r)^2=0, \qquad\quad \text{ on } B^\kappa_{L_\kappa}\setminus \overline{B^\kappa_R}.
    \end{equation*}
    Therefore, for $\nu$-a.e. $x \in B^\kappa_{L_\kappa}$, 
    \begin{equation*}
        \Big(2\cdot\1_{B^\kappa_R}(x)-\alpha_H(x)\Big)g'(r)^2 \ge 0, \qquad \Big(2\cdot\1_{B^\kappa_R}(x)-\alpha_H(x)\Big)\bigg(\frac{g(r)^2}{\sn_\kappa(r)^2}-\frac{g(R)^2}{\sn_\kappa(R)^2}\bigg) \ge 0.
    \end{equation*}
    Since $\theta\theta^\transpose$ and $\Id-\theta\theta^\transpose$ are positive semidefinite, we obtain
    \begin{equation*}
        \lambda a\Id-d\mathbf{Z}-\mathbf{K} \succeq \mathbf{0}.
    \end{equation*}

    We now turn to the equality case. If $\alpha_H=2\cdot\1_{B^\kappa_R}$ for $\nu$-a.e. $x\in B^\kappa_{L_\kappa}$, equality follows immediately from \eqref{eq:temp-K-leq-lambdaaI-dZ}. 
    Conversely, if $\mathbf{K}=\lambda a\Id-d\mathbf{Z}$, taking the trace of \eqref{eq:temp-K-leq-lambdaaI-dZ} yields
    \begin{equation*}
        \int_{B^\kappa_{L_\kappa}} \Bigg[ \Big(2\cdot\1_{B^\kappa_R}(x)-\alpha_H(x)\Big)g'(r)^2 + (n-1)\Big(2\cdot\1_{B^\kappa_R}(x)-\alpha_H(x)\Big)\bigg(\frac{g(r)^2}{\sn_\kappa(r)^2}-\frac{g(R)^2}{\sn_\kappa(R)^2}\bigg) \Bigg]\dd\nu = 0.
    \end{equation*}
    This implies that
    \begin{equation*}
        \alpha_H(x) = 2\cdot\1_{B^\kappa_R}(x), \qquad \text{for $\nu$-a.e. } x\in B^\kappa_{L_\kappa},
    \end{equation*}
    since $g^2/\sn_\kappa^2$ is strictly decreasing.
\end{proof}

We are now in a position to combine the Ritz reduction with the matrix comparison and complete the proof of the inequality.
\subsection{Completion of the inequality proof}
Combining \eqref{eq:Z-trace-free} with \cref{prop:M-geq-aI+cZ} and \cref{prop:K-leq-lambdaaI-dZ}, \cref{lem:trace-free-matrix-Jensen-inequality} yields
\begin{equation*}
    \tr(\mathbf K^{-1}\mathbf M)\geq\frac n\lambda,
\end{equation*}
where $\lambda=\mu_1(B^\kappa_R)$. Applying \cref{prop:Sigma1/mu-geq-trK^-1M}, the desired inequality 
\begin{equation*}
    \sum_{j=2}^{n+1}\frac1{\mu_j(\Omega)} \geq \frac{n}{\mu_1(B^\kappa_R)} = \frac{n}{\mu_2(B^\kappa_R \sqcup B^\kappa_R)}
\end{equation*}
follows. This proves the inequality \eqref{eq:main} in \cref{thm:main}. It remains to analyze the equality case, which is carried out in the next section.

\section{Equality and geometric rigidity}\label{sec:Equality and geometric rigidity}
We first verify that the proposed extremizers attain equality and then prove the converse rigidity statement.

If $\Omega=B_1\sqcup B_2$, where $B_1,B_2$ are disjoint equal geodesic balls, then $\mu_0(\Omega)=\mu_1(\Omega)=0$ and
\[
    \mu_2(\Omega)=\cdots=\mu_{n+1}(\Omega)=\mu_1(B_1),
\]
so equality holds. Indeed, since \(\mu_1(B)\) has multiplicity \(n\), the eigenvalue \(\mu_2(B\sqcup B)\) has multiplicity \(2n\).

Conversely, suppose equality holds in \cref{thm:main}. Since the proof above gives
\begin{equation*}
    \sum_{j=2}^{n+1}\frac{1}{\mu_j(\Omega)} \ge \tr(\mathbf K^{-1}\mathbf M) \ge \frac n\lambda,
\end{equation*}
both inequalities are equalities. By \cref{lem:trace-free-matrix-Jensen-inequality}, we obtain
\begin{equation*}
    \mathbf{Z}=\mathbf{0}, \qquad \mathbf{M}=a\Id, \qquad \mathbf{K}=\lambda a\Id.
\end{equation*}
Therefore, the equalities in \cref{prop:M-geq-aI+cZ} and \cref{prop:K-leq-lambdaaI-dZ} hold, which imply
\begin{equation}\label{eq:alpha_H=2 1_BR}
    \begin{aligned}
        \alpha_{H} = 2\cdot\1_{B^\kappa_R} \ \ \qquad \text{for $\nu$-a.e. } x\in\M_\kappa^n.
    \end{aligned}
\end{equation}

\subsection{The Euclidean and hyperbolic cases}
Recall from \eqref{eq:folded-measure} that the fold density $\alpha_H$ is 
\begin{equation*}
    \alpha_H := \1_{T_{-H}(\Omega\cap H)} + \1_{T_{-H}(R_H(\Omega)\cap H)}.
\end{equation*}
By \eqref{eq:alpha_H=2 1_BR}, 
\begin{equation*}
    T_{-H}(\Omega\cap H) = T_{-H}(R_H(\Omega)\cap H) = B^\kappa_R \quad \text{up to a $\nu$-null set}, \qquad \kappa\le0.
\end{equation*}
This implies both $\Omega \cap H$ and $\Omega\cap(\M_\kappa^n\setminus\overline{H})$ agree almost everywhere with geodesic balls of radius $R$. 
Since $H$ and $\M_\kappa^n\setminus\overline{H}$ on opposite sides of $\partial H$ are disjoint, 
\begin{equation}\label{eq:Omega=B1 cup B2 a.e.}
    \Omega = B_1 \sqcup B_2 \quad \text{up to a $\nu$-null set},
\end{equation}
where $B_1$ and $B_2$ are two disjoint geodesic balls of radius $R$.

Since \(\Omega\) is open, \eqref{eq:Omega=B1 cup B2 a.e.} implies $\Omega\subset B_1\sqcup B_2$.
Indeed, otherwise, $\exists x\in\Omega\setminus(B_1\sqcup B_2)$ would have a neighborhood $B_\varepsilon^\kappa(x)$ whose intersection with \(\Omega\setminus(B_1\sqcup B_2)\) has positive volume, contradicting \eqref{eq:Omega=B1 cup B2 a.e.}.

Meanwhile, since \(\Omega\) is Lipschitz, \eqref{eq:Omega=B1 cup B2 a.e.} implies $B_1\sqcup B_2\subset\Omega$. 
Suppose that $\exists x\in(B_1\sqcup B_2)\setminus\Omega$. Since $\nu\bigl((B_1\sqcup B_2)\setminus\Omega\bigr)=0$, we must have \(x\in\partial\Omega\). As \(x\) is an interior point of \(B_1\cup B_2\), the exterior density property of the Lipschitz domain \(\Omega\) gives, for all sufficiently small \(\varepsilon>0\),
\[
    0<\nu\bigl(B_\varepsilon^\kappa(x)\setminus\Omega\bigr)\leq\nu\bigl((B_1\sqcup B_2)\setminus\Omega\bigr)=0,
\]
a contradiction.

Therefore,
\[
    \Omega=B_1 \sqcup B_2.
\]

\subsection{The spherical case}
Recall that the modified fold density $\alpha_H$ defined by \eqref{eq:folded-measure-modification-S^n} is
\begin{equation*}
    \alpha_H = \1_{\Omega_1\cap\mathbb{S}^{n}_{+}} + \1_{\Omega_2\cap\mathbb{S}^{n}_{+}},
\end{equation*}
where
\begin{equation*}
    \begin{aligned}
        \Omega_1 &:= \Big(T_{-H}(\Omega\cap H)\Big)      \sqcup \Big(-T_{-H}(\Omega\cap H)\Big), \\
        \Omega_2 &:= \Big(T_{-H}(R_H(\Omega)\cap H)\Big) \sqcup \Big(-T_{-H}(R_H(\Omega)\cap H)\Big).
    \end{aligned}
\end{equation*}
By \eqref{eq:alpha_H=2 1_BR},
\begin{equation}\label{eq:temp1-equality-on-S^n}
    \Omega_1\cap\mathbb{S}^n_+ = \Omega_2\cap\mathbb{S}^n_+ = B^1_R \quad\text{up to a $\nu$-null set}.
\end{equation}
Since $\Omega_1$ and $\Omega_2$ are antipodally symmetric, and since $B_R^1\subset\mathbb{S}^n_+$, it follows that
\begin{equation*}
    \Omega_1 = \Omega_2 = B_R^1\sqcup(-B_R^1) \quad \text{up to a $\nu$-null set}.
\end{equation*}

We first show that the centered folding interface $T_{-H}(\partial H)$ cannot meet the interior of either comparison ball. Since $\mathbf K=\lambda\mathbf M$, every nonzero function in $\Span\{P_1,\ldots,P_n\}$ has Rayleigh quotient $\lambda$. Since $\mu_{j+1}(\Omega)\leq\lambda$ for $j=1,\ldots,n$ and equality holds in the reciprocal sum, we have
\[
    \mu_2(\Omega)=\cdots=\mu_{n+1}(\Omega)=\lambda.
\]
The spectral decomposition on $\{u_0,u_1\}^{\perp}$ then shows that $\Span\{P_1,\ldots,P_n\}$ is contained in the eigenspace corresponding to $\lambda$. In particular, each $P_i$ is smooth in the interior of $\Omega$.

Suppose that $T_{-H}(\partial H)$ meets the interior of $B_R^1$ or $-B_R^1$. By \eqref{eq:temp1-equality-on-S^n} and antipodal symmetry, after undoing $T_{-H}$ and $R_H$, the domain $\Omega$ occupies both sides of the corresponding portion of $\partial H$ up to a $\nu$-null set. Since $\Omega$ is open and Lipschitz, the exterior density property excludes this portion from being a null crack. Hence, it is an interior hypersurface of $\Omega$.

Let $x$ be a point on this hypersurface, let $\eta$ be a unit normal to $\partial H$ at $x$, and put $y=T_{-H}x$. Since $F_H$ is the identity on one side of $\partial H$ and equals $R_H$ on the other side, while $dR_H(\eta)=-\eta$, we have
\begin{equation*}
    \left.\partial_\eta\mathcal{P}\right|_{+}=D\mathcal{G}(y)dT_{-H}(\eta), \qquad \left.\partial_\eta\mathcal{P}\right|_{-}=-D\mathcal{G}(y)dT_{-H}(\eta).
\end{equation*}
These normal derivatives are nonzero because $D\mathcal G$ is nonsingular in the interior of $B_R^1\sqcup(-B_R^1)$. Indeed, on $B_R^1$ its radial and tangential factors are $g'(r)>0$ and $g(r)/\sin r>0$, respectively, while the corresponding nonsingularity on $-B_R^1$ follows from antipodal symmetry. Therefore, $\mathcal{P}$ fails to be $C^1$ across an interior hypersurface of $\Omega$, contradicting the interior smoothness of its component functions $P_i$.

Consequently, the connected ball $B_R^1$ lies entirely on one side of the centered folding interface. After interchanging the two antipodal branches if necessary, we may suppose that
\begin{equation*}
    -T_{-H}(\Omega\cap H)\cap\mathbb{S}^n_+ = \varnothing \quad\text{up to a $\nu$-null set}.
\end{equation*}
It then follows from \eqref{eq:temp1-equality-on-S^n} that
\begin{equation}\label{eq:temp3-equality-on-S^n}
    T_{-H}(\Omega\cap H) = B_R^1 \quad\text{up to a $\nu$-null set}.
\end{equation}

Moreover, both $T_{-H}(\Omega\cap H)$ and $T_{-H}(R_H(\Omega)\cap H)$ lie in the same hemisphere $T_{-H}H$. Hence the preceding choice of antipodal branch and \eqref{eq:temp1-equality-on-S^n} also give
\begin{equation}\label{eq:temp4-equality-on-S^n}
    T_{-H}(R_H(\Omega)\cap H) = B_R^1 \quad\text{up to a $\nu$-null set}.
\end{equation}

By \eqref{eq:temp3-equality-on-S^n} and \eqref{eq:temp4-equality-on-S^n}, the two parts of $\Omega$ lying on the opposite sides of $\partial H$ agree almost everywhere with two disjoint geodesic balls of radius $R$. As in the Euclidean and hyperbolic cases, the openness of $\Omega$ and the Lipschitz exterior density property upgrade this almost-everywhere identity to an equality of domains. Therefore,
\begin{equation*}
    \Omega=B_1\sqcup B_2,
\end{equation*}
where $B_1$ and $B_2$ are two disjoint equal geodesic balls.

This completes the proof of \cref{thm:main}.

\section*{Acknowledgments}
The authors wish to express their sincere gratitude to Professor Richard S. Laugesen for his careful reading of the manuscript and for his valuable suggestion to pursue a global approach to the matrix estimates.
The authors also acknowledge the use of AI for assistance with English-language editing and polishing. 
The authors take full responsibility for the mathematical content and the final manuscript.


\bibliographystyle{amsalpha}
\bibliography{CY}

\vspace{1cm}
%

\end{document}